\newtheorem{theorem}{Theorem}
\theoremstyle{plain}
\newtheorem{corollary}{Corollary}
\newtheorem{definition}{Definition}
\newtheorem{example}{Example}
\newtheorem{lemma}{Lemma}
\numberwithin{equation}{section}
\begin{document}
\title[$^{1}$\textbf{Naveed Yaqoob, }$^{2}$\textbf{Piergiulio Corsini, }$^{3}
$\textbf{Faisal}]{On Intra-Regular Left Almost Semihypergroups with Pure
Left Identity}
\author{$^{1}$\textbf{Naveed Yaqoob, }$^{2}$\textbf{Piergiulio Corsini, }$%
^{3}$\textbf{Faisal}}
\address{$^{1}$Department of Mathematics, Quaid-i-Azam University,
Islamabad, Pakistan}
\email{nayaqoob@ymail.com}
\address{$^{2}$Department of Civil Engineering and Architecture, Via delle
Scienze 206, Udine, Italy}
\email{corsini2002@yahoo.com}
\address{$^{3}$Department of Mathematics, COMSATS Institute of Information
Technology, Attock, Pakistan}
\email{yousafzaimath@yahoo.com}
\subjclass[2000]{20M10, 20N20.}
\keywords{LA-semihypergroups, Hyperideals, Intra-regular LA-semihypergroups.}

\begin{abstract}
In this paper, we characterize intra-regular LA-semihypergroups by using the
properties of their left and right hyperideals and we investigate some
useful conditions for an LA-semihypergroup to become an intra-regular
LA-semihypergroup.
\end{abstract}

\maketitle

\section{\textbf{Introduction}}

Kazim and Naseerudin \cite{Kazim}, introduced the concept of left almost
semigroups (abbreviated as LA-semigroups), right almost semigroups
(abbreviated as RA-semigroups). They generalized some useful results of
semigroup theory. Later, Mushtaq \cite{1978, Muhtaq} and others further
investigated the structure and added many useful results to the theory of
LA-semigroups, also see \cite{Holgate, Jezek, Khan, ref10, Protic}. An
LA-semigroup is the midway structure between a commutative semigroup and a
groupoid. Despite the fact, the structure is non-associative and
non-commutative. It nevertheless possesses many interesting properties which
we usually find in commutative and associative algebraic structures. Mushtaq
and Yusuf produced useful results \cite{1979}, on locally associative
LA-semigroups in 1979. In this structure they defined powers of an element
and congruences using these powers. They constructed quotient LA-semigroups
using these congruences. It is a useful non-associative structure with wide
applications in theory of flocks. A several papers are written on
LA-semigroups. There are lot of results which have been added to the theory
of LA-semigroups by Mushtaq, Kamran, Shabir, Aslam, Davvaz, Madad, Faisal,
Abdullah, Yaqoob, Hila, Rehman, Chinram, Holgate, Jezek, Protic and many
other researchers.

Hyperstructure theory was introduced in 1934, when F. Marty \cite{Marty}
defined hypergroups, began to analyze their properties and applied them to
groups. In the following decades and nowadays, a number of different
hyperstructures are widely studied from the theoretical point of view and
for their applications to many subjects of pure and applied mathematics by
many mathematicians. Nowadays, hyperstructures have a lot of applications to
several domains of mathematics and computer science and they are studied in
many countries of the world. In a classical algebraic structure, the
composition of two elements is an element, while in an algebraic
hyperstructure, the composition of two elements is a set. A lot of papers
and several books have been written on hyperstructure theory, see \cite%
{Corsini1, V1}. A recent book on hyperstructures \cite{Corsini} points out
on their applications in rough set theory, cryptography, codes, automata,
probability, geometry, lattices, binary relations, graphs and hypergraphs.
Another book \cite{book} is devoted especially to the study of hyperring
theory. Several kinds of hyperrings are introduced and analyzed. The volume
ends with an outline of applications in chemistry and physics, analyzing
several special kinds of hyperstructures: $e$-hyperstructures and
transposition hypergroups. Many authors studied different aspects of
semihypergroups, for instance, Corsini et al. \cite{Corsini34, Corsini23},
Davvaz et al.\ \cite{Davvaz1, Davvaz2}, Drbohlav et al. \cite{Drbohlav},
Fasino and Freni\ \cite{Fasino}, Gutan\ \cite{Gutan}, Hasankhani\ \cite%
{Hasankhani}, Hedayati \cite{Hedayati}, Hila et al.\ \cite{Hila5}, Leoreanu 
\cite{Leoreanu} and Onipchuk\ \cite{Onipchuk}. Recently, Hila and Dine \cite%
{Hila} introduced the notion of LA-semihypergroups as a generalization of
semigroups, semihypergroups and LA-semigroups. They investigated several
properties of hyperideals of LA-semihypergroup and defined the topological
space and study the topological structure of LA-semihypergroups using
hyperideal theory.

In this paper, we shall prove some results on intra-regular
LA-semihypergroups.

\section{\textbf{Preliminaries and Basic Definitions}}

In this section, we recall certain definitions and results needed for our
purpose.

\begin{definition}
A map $\circ :H\times H\rightarrow \mathcal{P}^{\ast }(H)$ is called \textit{%
hyperoperation} or \textit{join operation} on the set $H$, where $H$ is a
non-empty set and $\mathcal{P}^{\ast }(H)=\mathcal{P}(H)\backslash
\{\emptyset \}$ denotes the set of all non-empty subsets of $H$. A
hypergroupoid is a set $H$ together with a (binary) hyperoperation.
\end{definition}

If $A$ and $B$ be two non-empty subsets of $H,$ then we denote

\begin{equation*}
A\circ B=\bigcup\limits_{a\in A,b\in B}a\circ b\text{, \ \ \ }a\circ
A=\left\{ a\right\} \circ A\text{ \ and }a\circ B=\left\{ a\right\} \circ B%
\text{.}
\end{equation*}

\begin{definition}
\cite{Hila} A hypergroupoid $(H,\circ )$ is called an LA-semihypergroup if,
for all $x,y,z\in H,$%
\begin{equation*}
(x\circ y)\circ z=(z\circ y)\circ x.
\end{equation*}
\end{definition}

The law $(x\circ y)\circ z=(z\circ y)\circ x$ is called a left invertive law.

\begin{example}
\label{1t5y}Let $H=\{x,y,z,w,t\}$ with the binary hyperoperation defined
below:%
\begin{equation*}
\begin{tabular}{c|ccccc}
$\circ $ & $x$ & $y$ & $z$ & $w$ & $t$ \\ \hline
$x$ & $x$ & $x$ & $x$ & $x$ & $x$ \\ 
$y$ & $x$ & $\{z,t\}$ & $z$ & $\{x,w\}$ & $\{z,t\}$ \\ 
$z$ & $x$ & $z$ & $z$ & $\{x,w\}$ & $z$ \\ 
$w$ & $x$ & $\{x,w\}$ & $\{x,w\}$ & $w$ & $\{x,w\}$ \\ 
$t$ & $x$ & $\{y,t\}$ & $z$ & $\{x,w\}$ & $\{y,t\}$%
\end{tabular}%
\end{equation*}%
Clearly $H$ is not a semihypergroup because $\{y,z,t\}=(t\circ t)\circ y\neq
t\circ (t\circ y)=\{y,t\}.$ Thus $H$ is an LA-semihypergroup because the
elements of $H$ satisfies the left invertive law.
\end{example}

\begin{example}
Let $H=%
\mathbb{Z}
.$ If we define $x\circ y=y-x+3%
\mathbb{Z}
,$ where $x,y\in 
\mathbb{Z}
.$ Then $(H,\circ )$ becomes an LA-semihypergroup as,%
\begin{eqnarray*}
(x\circ y)\circ z &=&z-(x\circ y)+3%
\mathbb{Z}
=z-(y-x+3%
\mathbb{Z}
)+3%
\mathbb{Z}
\\
&=&z-y+x-3%
\mathbb{Z}
+3%
\mathbb{Z}
=x-y+z-3%
\mathbb{Z}
+3%
\mathbb{Z}
\\
&=&x-(y-z+3%
\mathbb{Z}
)+3%
\mathbb{Z}
=x-(z\circ y)+3%
\mathbb{Z}
\\
&=&(z\circ y)\circ x.
\end{eqnarray*}%
Which implies that $(x\circ y)\circ z=(z\circ y)\circ x$ holds for all $%
x,y,z\in 
\mathbb{Z}
,$ also it is clear that $(x\circ y)\circ z\neq x\circ (y\circ z).$ Hence $%
(H,\circ )$ is an LA-semihypergroup.
\end{example}

Every LA-semihypergroup satisfies the law%
\begin{equation*}
(x\circ y)\circ (z\circ w)=(x\circ z)\circ (y\circ w)
\end{equation*}%
for all $x,y,z,w\in H.$ This law is known as medial law. (cf. \cite{Hila}).

\begin{definition}
Let $H$ be an LA-semihypergroup. An element $e\in H$ is called

$(i)$ left identity (resp. pure left identity) if for all $a\in H,$ $a\in
e\circ a$ (resp. $a=e\circ a$)$,$

$(ii)$ right identity (resp. pure right identity) if for all $a\in H,$ $a\in
a\circ e$ (resp. $a=a\circ e$)$,$

$(iii)$ identity (resp. pure identity) if for all $a\in H,$ $a\in e\circ
a\cap a\circ e$ (resp. $a=e\circ a\cap a\circ e$)$.$
\end{definition}

\begin{example}
Let $H=\{x,y,z,w\}$ with the binary hyperoperation defined below:%
\begin{equation*}
\begin{tabular}{c|cccc}
$\circ $ & $x$ & $y$ & $z$ & $w$ \\ \hline
$x$ & $x$ & $y$ & $z$ & $w$ \\ 
$y$ & $z$ & $\{y,z\}$ & $\{y,z\}$ & $w$ \\ 
$z$ & $y$ & $\{y,z\}$ & $\{y,z\}$ & $w$ \\ 
$w$ & $w$ & $w$ & $w$ & $H$%
\end{tabular}%
\end{equation*}%
Clearly $H$ is an LA-semihypergroup because the elements of $H$ satisfies
the left invertive law. Here $x$ is a pure left identity because for all $%
a\in H,$ $a=x\circ a.$ And in Example \ref{1t5y}, one can see that $t$ is a
left identity but not a pure left identity.
\end{example}

\begin{lemma}
\label{Lem_b_out}Let $H$ be an LA-semihypergroup with pure left identity $e,$
then $x\circ (y\circ z)=y\circ (x\circ z)$ holds for all $x,y,z\in H.$
\end{lemma}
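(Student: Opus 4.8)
The plan is to deduce the identity directly from the medial law together with the defining property of a pure left identity, with essentially no extra work. The key observation is that, since $e$ is a pure left identity, $e\circ x=\{x\}$ as subsets of $H$ for every $x\in H$ (the statement $x=e\circ x$ means precisely that the set $e\circ x$ is the singleton $\{x\}$). Consequently, for any $x\in H$ and any non-empty subset $A\subseteq H$ one has $(e\circ x)\circ A=\{x\}\circ A=x\circ A$, and similarly $A\circ(e\circ x)=A\circ x$.

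First I would rewrite the left-hand side as $x\circ(y\circ z)=(e\circ x)\circ(y\circ z)$, using the remark above with $A=y\circ z$. Next I would apply the medial law $(a\circ b)\circ(c\circ d)=(a\circ c)\circ(b\circ d)$, which holds in every LA-semihypergroup (as recalled just before the lemma in the excerpt), with the choice $a=e$, $b=x$, $c=y$, $d=z$; this gives $(e\circ x)\circ(y\circ z)=(e\circ y)\circ(x\circ z)$. Finally, using $e\circ y=\{y\}$, the right-hand side collapses to $y\circ(x\circ z)$, and chaining the three equalities yields $x\circ(y\circ z)=y\circ(x\circ z)$, as desired.

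The only point that needs a word of care is the passage from the ``element'' form of the medial law to its use with the subsets $e\circ x$ and $y\circ z$ standing in for single elements; but since $e\circ x$ is in fact the singleton $\{x\}$ this is immediate, and in any case the medial law extends from elements to arbitrary non-empty subsets by taking unions, because $A\circ B=\bigcup_{a\in A,\,b\in B}a\circ b$. So I do not expect a genuine obstacle here: the lemma is essentially a one-line corollary of the medial law and the purity of the left identity.
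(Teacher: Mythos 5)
Your proof is correct and follows exactly the paper's argument: rewrite $x\circ(y\circ z)$ as $(e\circ x)\circ(y\circ z)$, apply the medial law, and collapse $e\circ y$ back to $y$ using purity of the left identity. The extra remark on extending the medial law from elements to subsets is a harmless (and reasonable) bit of added care, not a different route.
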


\begin{proof}
Let $H$ be an LA-semihypergroup with pure left identity $e$, then for all $%
x,y,z\in H$ and by medial law, we have%
\begin{equation*}
x\circ (y\circ z)=(e\circ x)\circ (y\circ z)=(e\circ y)\circ (x\circ
z)=y\circ (x\circ z).
\end{equation*}%
This completes the proof.
\end{proof}

\begin{lemma}
Let $H$ be an LA-semihypergroup with pure left identity $e,$ then $(x\circ
y)\circ (z\circ w)=(w\circ y)\circ (z\circ x)$ holds for all $x,y,z,w\in H.$
\end{lemma}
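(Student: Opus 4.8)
The plan is to prove the identity by three short rewrites, using only the left invertive law and Lemma \ref{Lem_b_out} (which asserts $a\circ(b\circ c)=b\circ(a\circ c)$ for all $a,b,c\in H$ when $H$ has a pure left identity). First I would note that both laws, being identities between elementwise products, extend verbatim to non-empty subsets of $H$ because $A\circ B=\bigcup_{a\in A,\,b\in B}a\circ b$; hence in what follows composite expressions such as $x\circ y$ and $w\circ y$ may legitimately be substituted into Lemma \ref{Lem_b_out} and into the left invertive law in the slot of a single letter.

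The first step un-nests the left-hand side into a right-associated product: applying Lemma \ref{Lem_b_out} with $x\circ y$, $z$, $w$ in the roles of $a$, $b$, $c$ gives
\[
(x\circ y)\circ(z\circ w)=z\circ\bigl((x\circ y)\circ w\bigr).
\]
The second step performs the transposition of $x$ and $w$ that we are after, using the left invertive law on the inner product, $(x\circ y)\circ w=(w\circ y)\circ x$, so that the right-hand side above becomes $z\circ\bigl((w\circ y)\circ x\bigr)$. The third step re-nests by Lemma \ref{Lem_b_out} once more, now with $z$, $w\circ y$, $x$ in the roles of $a$, $b$, $c$:
\[
z\circ\bigl((w\circ y)\circ x\bigr)=(w\circ y)\circ(z\circ x),
\]
which is exactly the claimed identity.

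There is no genuine obstacle here beyond bookkeeping: the only thing to get right is the order of the three moves --- first open up the product with Lemma \ref{Lem_b_out}, then exploit the left invertive law to transpose $x$ and $w$, then close the product up again --- together with the observation that the two laws being invoked remain valid when their letters stand for subsets rather than elements. (One could alternatively avoid Lemma \ref{Lem_b_out} and argue directly from the medial law by writing $x=e\circ x$, regrouping so that the block $(z\circ w)\circ e=(e\circ w)\circ z=w\circ z$ collapses via the left invertive law and the pure left identity, and then applying the medial law a second time; but the route through Lemma \ref{Lem_b_out} is shorter and I would present that one.)
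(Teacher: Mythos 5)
Your proof is correct, and it takes a different route from the paper's. The paper proves the paramedial law directly from the pure left identity: it rewrites $x=e\circ x$ and $z=e\circ z$, then applies the left invertive law, the medial law, and $e\circ e=e$ to shuttle the factors around, i.e.\ $(x\circ y)\circ(z\circ w)=((y\circ x)\circ e)\circ((w\circ z)\circ e)=((y\circ x)\circ(w\circ z))\circ(e\circ e)=\dots=(w\circ z)\circ(y\circ x)=(w\circ y)\circ(z\circ x)$ --- essentially the alternative you sketch in your closing parenthesis. You instead route everything through Lemma \ref{Lem_b_out}: un-nest with $a\circ(b\circ c)=b\circ(a\circ c)$, transpose $x$ and $w$ inside with the left invertive law, and re-nest with Lemma \ref{Lem_b_out} again. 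Your version is shorter (three rewrites instead of seven), and it isolates exactly where the pure left identity enters (only through Lemma \ref{Lem_b_out}); its one extra burden is the substitution of composite expressions like $x\circ y$ into laws stated for elements, which you rightly flag and justify by observing that identities among hyperproducts extend to non-empty subsets via $A\circ B=\bigcup_{a\in A,\,b\in B}a\circ b$ --- a point the paper's own computation also relies on (its medial-law steps have sets in the slots) but leaves implicit, so making it explicit is a genuine improvement in care rather than a detour.
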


\begin{proof}
Let $H$ be an LA-semihypergroup with pure left identity $e$, then for all $%
x,y,z\in H$ and by medial law, we have%
\begin{eqnarray*}
(x\circ y)\circ (z\circ w) &=&((e\circ x)\circ y)\circ ((e\circ z)\circ
w)=((y\circ x)\circ e)\circ ((w\circ z)\circ e) \\
&=&((y\circ x)\circ (w\circ z))\circ (e\circ e)=((e\circ e)\circ (w\circ
z))\circ (y\circ x) \\
&=&(e\circ (w\circ z))\circ (y\circ x)=(w\circ z)\circ (y\circ x) \\
&=&(w\circ y)\circ (z\circ x).
\end{eqnarray*}%
This completes the proof.
\end{proof}

The law $(x\circ y)\circ (z\circ w)=(w\circ y)\circ (z\circ x)$ is called a
paramedial law.

An element $0$ in an LA-semihypergroup $H$ is called zero element if $x\circ
0=0\circ x=\{0\},$ for all $x\in H.$ Let $H$ be an LA-semihypergroup. A
non-empty subset $A$ of $H$ is called a sub LA-semihypergroup of $H$ if $%
x\circ y\subseteq A$ for every $x,y\in A.$ A subset $I$ of an
LA-semihypergroup $H$ is called a right (left) hyperideal if $I\circ
H\subseteq I$ ($H\circ I\subseteq I$) and is called a hyperideal if it is
two-sided hyperideal, and if $I$ is a left hyperideal of $H$, then $I\circ
I=I^{2}$ becomes a hyperideal of $H.$ By a bi-hyperideal of an
LA-semihypergroup $H$, we mean a sub LA-semihypergroup $B$ of $H$ such that $%
(B\circ H)\circ B\subseteq B.$ A sub LA-semihypergroup $B$ of $H$ is called
a (1,2)-hyperideal of $H$ if $(B\circ H)\circ B^{2}\subseteq B.$ It is easy
to note that each right hyperideal is a bi-hyperideal. If $E(B_{H})$\
denotes the set of all idempotents subsets of $H$ with pure left identity $e$%
, then $E(B_{H})$ forms a hypersemilattice structure. The intersection of
any set of bi-hyperideals of an LA-semihypergroup $H$ is either empty or a
bi-hyperideal of $H.$ A sub LA-semihypergroup $T$ of $H$ is called an
interior hyperideal of $H$ if $(T\circ H)\circ T\subseteq T.$ A non-empty
subset $Q$ of an LA-semihypergroup $H$ is called a quasi hyperideal of $H$
if $Q\circ H\cap H\circ Q\subseteq Q.$

\begin{lemma}
\label{ImpLem}If $H$ is an LA-semihypergroup with left identity $e,$ then $%
H\circ H=H.$
\end{lemma}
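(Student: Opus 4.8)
The plan is to prove the two inclusions separately. The inclusion $H\circ H\subseteq H$ is immediate, since $\circ$ is a hyperoperation on $H$: for any $a,b\in H$, $a\circ b$ is a non-empty subset of $H$, so the union $H\circ H=\bigcup_{a,b\in H}a\circ b$ is contained in $H$. The substantive direction is $H\subseteq H\circ H$.

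For the reverse inclusion, I would take an arbitrary $x\in H$ and use the left identity to write $x\in e\circ x$. Since $e\in H$ and $x\in H$, we have $e\circ x\subseteq H\circ H$, hence $x\in H\circ H$. As $x$ was arbitrary, $H\subseteq H\circ H$. Combining the two inclusions gives $H\circ H=H$.

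I do not expect any real obstacle here: the only subtlety is recognizing that the left identity (not necessarily pure) suffices, because membership $x\in e\circ x$ is all that is needed to place $x$ in the union defining $H\circ H$; purity ($x=e\circ x$) is not required. Thus the argument is a one-line application of the definition of left identity together with the definition of the product of subsets.
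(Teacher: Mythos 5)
Your proof is correct and is essentially the same as the paper's: both obtain $H\subseteq H\circ H$ from $x\in e\circ x\subseteq H\circ H$, with the trivial inclusion $H\circ H\subseteq H$ coming from the definition of the hyperoperation. Your remark that a left identity (not necessarily pure) suffices matches the paper's hypothesis exactly.
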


\begin{proof}
If $H$ is an LA-semihypergroup with left identity $e.$ Then $x\in H$ implies
that%
\begin{equation*}
x\in e\circ x\subseteq H\circ H\text{ and so }H\subseteq H\circ H.
\end{equation*}%
That is $H=H\circ H.$
\end{proof}

\begin{corollary}
If $H$ is an LA-semihypergroup with pure left identity $e$ then $H\circ H=H$
and $H=e\circ H=H\circ e.$
\end{corollary}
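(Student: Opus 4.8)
The plan is to establish the three equalities $H\circ H=H$, $e\circ H=H$, and $H\circ e=H$ one at a time, using nothing beyond the pure left identity property and the left invertive law. First note that a pure left identity is a fortiori a left identity, since $a=e\circ a$ entails $a\in e\circ a$ for every $a\in H$. Therefore Lemma \ref{ImpLem} applies directly and yields $H\circ H=H$ with no further argument.

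For the equality $e\circ H=H$ I would work straight from the definition of the induced hyperoperation on subsets. As $e$ is a pure left identity, $e\circ a=\{a\}$ for each $a\in H$, so
\[
e\circ H=\bigcup_{a\in H}(e\circ a)=\bigcup_{a\in H}\{a\}=H,
\]
the inclusion $e\circ H\subseteq H$ being automatic because $\circ$ takes values in $\mathcal{P}^{\ast}(H)$.

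The one step that requires a small trick is $H\circ e=H$. The inclusion $H\circ e\subseteq H$ is immediate, so fix $x\in H$ and aim to place it in $H\circ e$. Using the pure left identity (once for $x$, and once in the form $e\circ e=\{e\}$, obtained by taking $a=e$) together with the left invertive law $(p\circ q)\circ r=(r\circ q)\circ p$, I get
\[
x\in e\circ x=(e\circ e)\circ x=(x\circ e)\circ e=\bigcup_{v\in x\circ e}(v\circ e)\subseteq H\circ e .
\]
Hence $H\subseteq H\circ e$, and with the reverse inclusion this gives $H\circ e=H$. I expect the only (mild) obstacle to be recognizing the rewriting $e\circ x=(e\circ e)\circ x$, which is precisely what converts left multiplication by $e$ into right multiplication by $e$ via the left invertive law; the rest is routine manipulation of unions of hyperproducts.
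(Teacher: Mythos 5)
Your proof is correct and is essentially the argument the paper intends when it writes that the proof is ``similar to the proof of Lemma \ref{ImpLem}'': the equalities $H\circ H=H$ and $e\circ H=H$ come straight from the pure left identity, and your left-invertive-law step $x\in e\circ x=(e\circ e)\circ x=(x\circ e)\circ e\subseteq H\circ e$ is precisely the routine detail the paper leaves to the reader for $H\circ e=H$. No gaps to report.
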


\begin{proof}
The proof is similar to the proof of Lemma \ref{ImpLem}.
\end{proof}

In an LA-semigroup every right identity becomes a left identity. But in an
LA-semihypergroup every right identity need not to be a left identity. For
this, let $H=\{x,y,z\}$ with the binary hyperoperation defined below:%
\begin{equation*}
\begin{tabular}{c|ccc}
$\circ $ & $x$ & $y$ & $z$ \\ \hline
$x$ & $\{x,z\}$ & $z$ & $\{y,z\}$ \\ 
$y$ & $\{y,z\}$ & $z$ & $z$ \\ 
$z$ & $\{y,z\}$ & $\{y,z\}$ & $\{y,z\}$%
\end{tabular}%
\end{equation*}%
Clearly $H$ is not a semihypergroup because $\{y,z\}=(y\circ y)\circ z\neq
y\circ (y\circ z)=\{z\}.$ Thus $H$ is an LA-semihypergroup because the
elements of $H$ satisfies the left invertive law. Here $x$ is a right
identity but not a left identity. However if an LA-semihypergroup $H$ has a
pure right identity $e$ then, $e$ becomes a pure left identity. For this,
consider $b\in H$ and $e$ be a pure right identity of $H,$ then%
\begin{equation*}
b=b\circ e=(b\circ e)\circ e=(e\circ e)\circ b=e\circ b.
\end{equation*}%
This shows that in an LA-semihypergroup every pure right identity becomes a
pure left identity. Every LA-semihypergroup with pure right identity becomes
a commutative hypermonoid.

\begin{theorem}
An LA-semihypergroup $H$ is a semihypergroup if and only if $a\circ (b\circ
c)=(c\circ b)\circ a$ holds for all $a,b,c\in H.$
\end{theorem}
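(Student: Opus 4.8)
The plan is to derive both implications directly from the left invertive law $(x\circ y)\circ z=(z\circ y)\circ x$, which every LA-semihypergroup satisfies by definition; no auxiliary lemmas (not even the medial law or the existence of an identity) are needed, since all three laws in play are identities of non-empty subsets of $H$ and may be chained freely.

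For the forward direction, assume $H$ is a semihypergroup, so that $\circ$ is associative: $(a\circ b)\circ c=a\circ(b\circ c)$ for all $a,b,c\in H$. Then for arbitrary $a,b,c\in H$ I would simply write
\begin{equation*}
a\circ(b\circ c)=(a\circ b)\circ c=(c\circ b)\circ a,
\end{equation*}
where the first equality is associativity and the second is the left invertive law applied with $x=a$, $y=b$, $z=c$. This is exactly the asserted identity.

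For the converse, assume $a\circ(b\circ c)=(c\circ b)\circ a$ for all $a,b,c\in H$; I must show $\circ$ is associative. Fix $a,b,c\in H$. The left invertive law gives $(a\circ b)\circ c=(c\circ b)\circ a$, and the hypothesis, read from right to left, gives $(c\circ b)\circ a=a\circ(b\circ c)$. Chaining these two equalities yields $(a\circ b)\circ c=a\circ(b\circ c)$, i.e. associativity, so $H$ is a semihypergroup.

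There is no genuine obstacle here: the entire content is the observation that the left invertive law is precisely the bridge between $(a\circ b)\circ c$ and $(c\circ b)\circ a$, so the stated condition is merely a disguised form of associativity. The only point worth stating explicitly is that, because each law is an equality of subsets of $H$, the substitutions and the transitive chaining are legitimate without any appeal to cancellation or to an identity element.
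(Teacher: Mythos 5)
Your proof is correct and follows essentially the same route as the paper: in both directions the left invertive law is used to pass between $(a\circ b)\circ c$ and $(c\circ b)\circ a$, and the hypothesis (associativity, respectively the stated identity) supplies the remaining link. Your remark that all equalities are equalities of subsets, so they can be chained without cancellation or identity elements, is a harmless clarification of what the paper does implicitly.
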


\begin{proof}
Let $H$ be a semihypergroup. Then we have $(a\circ b)\circ c=a\circ (b\circ
c)$ for all $a,b,c\in H$, but $(a\circ b)\circ c=(c\circ b)\circ a,$ thus%
\begin{equation*}
a\circ (b\circ c)=(c\circ b)\circ a\text{ \ for all }a,b,c\in H.
\end{equation*}%
On the other hand, suppose $a\circ (b\circ c)=(c\circ b)\circ a$ holds\ for
all $a,b,c\in H.$ Since $H$ is an LA-semihypergroup, therefore%
\begin{equation*}
a\circ (b\circ c)=(c\circ b)\circ a=(a\circ b)\circ c.
\end{equation*}%
Thus $H$ is a semihypergroup. This completes the proof.
\end{proof}

\section{\textbf{Intra-Regular LA-semihypergroups}}

In this section, we characterized intra-regular LA-semihypergroup by using
the properties of left and right hyperideals.

\begin{definition}
An element $a$ of an LA-semihypergroup $H$ is called an intra-regular if
there exist $x,y\in H$ such that $a\in (x\circ a^{2})\circ y$ and $H$ is
called intra-regular, if every element of $H$ is intra-regular.
\end{definition}

\begin{example}
Let $H=\{x,y,z,w\}$ with the binary hyperoperation defined below:%
\begin{equation*}
\begin{tabular}{c|cccc}
$\circ $ & $x$ & $y$ & $z$ & $w$ \\ \hline
$x$ & $x$ & $\{x,w\}$ & $\{x,w\}$ & $w$ \\ 
$y$ & $\{x,w\}$ & $\{y,z\}$ & $\{y,z\}$ & $w$ \\ 
$z$ & $\{x,w\}$ & $y$ & $y$ & $w$ \\ 
$w$ & $w$ & $w$ & $w$ & $w$%
\end{tabular}%
\end{equation*}%
Clearly $H$ is an LA-semihypergroup because the elements of $H$ satisfies
the left invertive law. Here $H$ is intra-regular because, $x\in (y\circ
x^{2})\circ z,$ $y\in (z\circ y^{2})\circ z,$ $z\in (y\circ z^{2})\circ y,$ $%
w\in (x\circ w^{2})\circ z.$
\end{example}

\begin{example}
Let $H=%
\mathbb{Z}
.$ Define a hyperoperation $\circ $ on $H$ by:%
\begin{equation*}
x\circ y=\{x,y\}\cup 2%
\mathbb{Z}
\text{ for all }x,y\in H.
\end{equation*}%
Then for all $x,y,z\in H,$ we have%
\begin{equation*}
(x\circ y)\circ z=\{x,y,z\}\cup 2%
\mathbb{Z}
=(z\circ y)\circ x.
\end{equation*}%
This implies that $(H,\circ )$ is a an LA-semihypergroup. Since $x\in
(x\circ x)\circ x=\{x\}\cup 2%
\mathbb{Z}
$ also $x\in (x\circ (x\circ x))\circ x=\{x\}\cup 2%
\mathbb{Z}
.$ Thus $(H,\circ )$ is a regular as well as intra-regular LA-semihypergroup.
\end{example}

\begin{definition}
An element $a$ of an LA-semihypergroup $H$ with left identity $e$ is called
a left (right) invertible if there exits $x\in H$ such that $e\in x\circ a$ (%
$e\in a\circ x$) and $a$ is called invertible if it is both a left and a
right invertible. An LA-semihypergroup $H$ is called a left (right)
invertible if every element of $H$ is a left (right) invertible and $H$ is
called invertible if it is both a left and a right invertible.
\end{definition}

\begin{theorem}
\label{1}Every LA-semihypergroup $H$ with pure left identity $e$ is
intra-regular if $H$ is left (right) invertible.
\end{theorem}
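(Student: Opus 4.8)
The plan is to reduce the whole statement to one short computation built from two ingredients only: the left invertive law $(x\circ y)\circ z=(z\circ y)\circ x$, and the fact that a pure left identity $e$ satisfies $e\circ c=\{c\}$ for every element $c$, hence $e\circ S=S$ for every non-empty subset $S$ of $H$; in particular $e\circ a^{2}=a^{2}$. Notably, neither the medial nor the paramedial law is needed here.

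First I would dispose of the left-invertible case. Fix $a\in H$ and pick $a'\in H$ with $e\in a'\circ a$. Starting from $a=e\circ a$ and enlarging the left factor (legitimate since $e$ is one of the elements of the set $a'\circ a$), I get $a\in(a'\circ a)\circ a$. Applying the left invertive law with $x=a'$ and $y=z=a$ rewrites this as $a\in(a\circ a)\circ a'=a^{2}\circ a'$. Since $e\circ a^{2}=a^{2}$, this reads $a\in(e\circ a^{2})\circ a'$, which is exactly the defining inclusion for $a$ to be intra-regular (with $x=e$ and $y=a'$). As $a$ was arbitrary, $H$ is intra-regular.

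For the right-invertible case I would first note that, in the presence of a pure left identity, right invertibility forces left invertibility: if $e\in a\circ a''$, then using $e=e\circ e$ and enlarging a factor, $e\in(a\circ a'')\circ e$, and the left invertive law gives $(a\circ a'')\circ e=(e\circ a'')\circ a=a''\circ a$ (using $e\circ a''=\{a''\}$), so $e\in a''\circ a$. Thus every right-invertible element is left-invertible, and this case collapses to the previous one.

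I do not expect a real obstacle; the only point requiring care is the bookkeeping between elements and subsets — the passage $a=e\circ a\subseteq(a'\circ a)\circ a$ and the identities $e\circ a^{2}=a^{2}$ and $e\circ a''=\{a''\}$ must be read off directly from the pure-left-identity property applied pointwise, not from any cancellation. Once that is observed, the argument is essentially a two-line manipulation.
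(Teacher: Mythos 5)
Your proof is correct, and it takes a genuinely different and more economical route than the paper's. The paper starts from $a=e\circ (e\circ a)\subseteq (a'\circ a)\circ(e\circ a)\subseteq (H\circ a)\circ (H\circ a)$ and then runs a chain of set-level manipulations using the medial law, the identity $x\circ(y\circ z)=y\circ(x\circ z)$ (its Lemma \ref{Lem_b_out}) and $H\circ H=H$ (Lemma \ref{ImpLem}) to land on $a\in (H\circ a^{2})\circ H$, from which intra-regularity follows only by an existential reading of the two $H$'s; the right-invertible case is dismissed as ``similar.'' You instead produce explicit witnesses in one step: from $a=e\circ a$ and $e\in a'\circ a$ you get $a\in (a'\circ a)\circ a=(a\circ a)\circ a'=a^{2}\circ a'=(e\circ a^{2})\circ a'$, so $x=e$, $y=a'$ work, and no medial/paramedial law or auxiliary lemma is needed — only the left invertive law and the pointwise fact $e\circ S=S$ for a pure left identity, both of which you apply correctly at the level of sets. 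Your treatment of the right-invertible case is also a genuine improvement in explicitness: the observation $e\in a\circ a''\Rightarrow e\in (a\circ a'')\circ e=(e\circ a'')\circ a=a''\circ a$ shows right invertibility implies left invertibility under a pure left identity, making precise what the paper leaves as ``can be seen in a similar way.'' The only thing the paper's longer computation buys is stylistic uniformity with the $H$-level calculations used throughout the rest of the paper; mathematically your argument is shorter, sharper, and complete.
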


\begin{proof}
Let $H$ be a left invertible LA-semihypergroup with pure left identity $e$,
then for $a\in H$ there exists $a^{^{\prime }}\in H$ such that $e\in
a^{^{\prime }}\circ a.$ Now by using left invertive law, medial law, Lemma %
\ref{Lem_b_out} and Lemma \ref{ImpLem}, we have%
\begin{eqnarray*}
a &=&e\circ a=e\circ (e\circ a)=(a^{^{\prime }}\circ a)\circ (e\circ
a)\subseteq (H\circ a)\circ (H\circ a) \\
&=&(H\circ a)\circ ((H\circ H)\circ a)=(H\circ a)\circ ((a\circ H)\circ H) \\
&=&(a\circ H)\circ ((H\circ a)\circ H)=(a\circ (H\circ a))\circ (H\circ H) \\
&=&(a\circ (H\circ a))\circ H=(H\circ (a\circ a))\circ H=(H\circ a^{2})\circ
H.
\end{eqnarray*}%
Which shows that $H$ is intra-regular. The case for right invertible can be
seen in a similar way.
\end{proof}

\begin{theorem}
\label{ji}An LA-semihypergroup $H$ with left identity $e$ is intra-regular
if $H\circ a=H$ or $a\circ H=H$ for all $a$ $\in H.$
\end{theorem}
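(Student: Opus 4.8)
The plan is to split into the two cases given in the hypothesis and, in each, show directly that $(H\circ a^{2})\circ H=H$ for every $a\in H$. Since $a\in H$, this immediately gives $a\in (H\circ a^{2})\circ H$, i.e. there exist $x,y\in H$ with $a\in (x\circ a^{2})\circ y$, which is exactly the definition of $a$ being intra-regular. Throughout I will use that $H\circ H=H$ (Lemma \ref{ImpLem}, which is where the left identity $e$ is needed), together with the medial and left invertive laws extended from elements to subsets in the obvious way (e.g. $(A\circ B)\circ C=(C\circ B)\circ A$ for all subsets $A,B,C$, obtained by taking unions of the element-wise identity).

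First suppose $H\circ a=H$ for all $a\in H$. Fix $a\in H$. Since $a^{2}=a\circ a$ is a non-empty subset of $H$, applying the hypothesis to each of its elements gives $H\circ a^{2}=\bigcup_{b\in a^{2}}H\circ b=H$. Hence $(H\circ a^{2})\circ H=H\circ H=H\ni a$, so $a$ is intra-regular.

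Now suppose $a\circ H=H$ for all $a\in H$. Fix $a\in H$. Here I would compute, using $H\circ H=H$ and the medial law,
\begin{equation*}
(H\circ a^{2})\circ H=(H\circ a^{2})\circ (H\circ H)=(H\circ H)\circ (a^{2}\circ H)=H\circ (a^{2}\circ H).
\end{equation*}
Since $a^{2}$ is again a non-empty subset of $H$, the hypothesis applied elementwise gives $a^{2}\circ H=H$, so the last expression equals $H\circ H=H\ni a$, and $a$ is intra-regular. (Alternatively, one can first observe $H\circ a=(H\circ H)\circ a=(a\circ H)\circ H=H\circ H=H$ by the left invertive law, which reduces this case to the previous one.)

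I do not expect a genuine obstacle here; the only points needing a little care are that $a^{2}$ must be handled as a non-empty subset of $H$ (so that $H\circ a^{2}$ and $a^{2}\circ H$ collapse to $H$) and that the medial and left invertive laws are being invoked at the level of subsets rather than single elements. Assembling the two cases then yields that every element of $H$ is intra-regular, hence $H$ is intra-regular.
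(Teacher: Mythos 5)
Your proposal is correct and follows essentially the same route as the paper: in both arguments the point is to show $a\in (H\circ a^{2})\circ H$ by combining the hypothesis with $H=H\circ H$ and the medial/left invertive laws, and your parenthetical remark for the case $a\circ H=H$ (namely $H\circ a=(H\circ H)\circ a=(a\circ H)\circ H=H$) is exactly the paper's reduction to the first case. Your elementwise observation that $H\circ a^{2}=H$ (resp. $a^{2}\circ H=H$) merely shortcuts the paper's substitution $((H\circ a)\circ (H\circ a))\circ H=(H\circ a^{2})\circ H$ via the medial law, so the two proofs differ only in bookkeeping.
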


\begin{proof}
Let $H$ be an LA-semihypergroup such that $H\circ a=H$ holds for all $a\in
H, $ then $H=H^{2}$. Let $a\in H$, therefore by using medial law, we have%
\begin{equation*}
a\in H=(H\circ H)\circ H=((H\circ a)\circ (H\circ a))\circ H=((H\circ
H)\circ (a\circ a))\circ H=(H\circ a^{2})\circ H.
\end{equation*}%
Which shows that $H$ is intra-regular.

Let $a\in H$ and assume that $a\circ H=H$ holds for all $a\in H,$ then by
using left invertive law, we have%
\begin{equation*}
a\in H=H\circ H=(a\circ H)\circ H=(H\circ H)\circ a=H\circ a.
\end{equation*}%
Thus $H\circ a=H$ holds for all $a$ $\in H$, therefore it follows from above
that $H$ is intra-regular.
\end{proof}

\begin{corollary}
An LA-semihypergroup $H$ with pure left identity $e$ is intra-regular if $%
H\circ a=H$ or $a\circ H=H$ for all $a$ $\in H.$
\end{corollary}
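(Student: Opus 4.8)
The plan is to deduce this corollary immediately from Theorem \ref{ji}. First I would observe that a pure left identity is \emph{a fortiori} a left identity: if $e\circ a=\{a\}$ for every $a\in H$, then in particular $a\in e\circ a$ for every $a\in H$, which is exactly the defining property of a left identity. Hence an LA-semihypergroup with pure left identity $e$ is in particular an LA-semihypergroup with left identity $e$.

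Next, the hypothesis of the corollary --- that $H\circ a=H$ or $a\circ H=H$ for all $a\in H$ --- is verbatim the hypothesis of Theorem \ref{ji}. So applying Theorem \ref{ji} to $H$, now regarded as an LA-semihypergroup with left identity $e$, yields that $H$ is intra-regular, which is the desired conclusion.

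There is essentially no obstacle here; all the content is already in Theorem \ref{ji}, and the corollary merely records the specialization to the (stronger) pure-left-identity setting that is the standing hypothesis for most of this section. If a self-contained argument were preferred, one could instead repeat the two short computations from the proof of Theorem \ref{ji}: in the case $H\circ a=H$ one has $H=H^{2}$, and by the medial law $a\in H=(H\circ H)\circ H=((H\circ a)\circ(H\circ a))\circ H=((H\circ H)\circ(a\circ a))\circ H=(H\circ a^{2})\circ H$; in the case $a\circ H=H$ one first uses the left invertive law to get $a\in H=H\circ H=(a\circ H)\circ H=(H\circ H)\circ a=H\circ a$, reducing to the previous case. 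Either way, the conclusion $a\in(H\circ a^{2})\circ H$ for all $a\in H$ shows $H$ is intra-regular.
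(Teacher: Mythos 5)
Your proposal is correct and matches the paper's intent: the corollary is stated without proof precisely because a pure left identity is in particular a left identity, so Theorem \ref{ji} applies verbatim, which is exactly your argument. Your optional self-contained repetition of the two computations from Theorem \ref{ji} is also accurate, so there is nothing to fix.
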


\begin{corollary}
If $H$ is an LA-semihypergroup such that $a\circ H=H$ holds for all $a$ $\in
H,$ then $H\circ a=H$ holds for all $a$ $\in H.$
\end{corollary}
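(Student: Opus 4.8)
The plan is to extract this directly from the computation already carried out in the second half of the proof of Theorem \ref{ji}; the statement is essentially that intermediate fact isolated on its own. First I would observe that the hypothesis $a\circ H=H$ for every $a\in H$ forces $H$ to be idempotent as a whole: since $H\circ H=\bigcup_{a\in H}(a\circ H)=\bigcup_{a\in H}H=H$, we get $H\circ H=H$. This is the only place the global hypothesis (rather than a single instance) is used, and it is the small point one must not skip.

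Next I would fix an arbitrary $a\in H$ and chase equalities using the left invertive law, remembering that the identity $(x\circ y)\circ z=(z\circ y)\circ x$ passes to non-empty subsets because $\circ$ distributes over unions. Concretely: $H=H\circ H=(a\circ H)\circ H=(H\circ H)\circ a=H\circ a$, where the third equality is the (subset form of the) left invertive law applied with the blocks $a$, $H$, $H$, and the last equality uses $H\circ H=H$ again. Since $a$ was arbitrary, $H\circ a=H$ for all $a\in H$, which is the desired conclusion.

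I do not anticipate a genuine obstacle here: the argument is a two-line manipulation once $H\circ H=H$ is in hand. The only thing to be careful about is the bookkeeping in applying the left invertive and medial-type laws to subsets instead of elements, but that is routine and was already used freely in the preceding proofs. I would simply write the displayed chain of equalities and remark that it completes the proof.

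\begin{proof}
Assume $a\circ H=H$ for every $a\in H$. Then
\begin{equation*}
H\circ H=\bigcup_{a\in H}\left( a\circ H\right) =\bigcup_{a\in H}H=H,
\end{equation*}
so $H=H\circ H$. Now let $a\in H$ be arbitrary. Using $H=H\circ H$ together with the left invertive law (applied to non-empty subsets), we obtain
\begin{equation*}
H=H\circ H=(a\circ H)\circ H=(H\circ H)\circ a=H\circ a.
\end{equation*}
Since $a\in H$ was arbitrary, $H\circ a=H$ holds for all $a\in H$. This completes the proof.
\end{proof}
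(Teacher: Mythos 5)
Your proof is correct and follows essentially the same route as the second half of the paper's proof of Theorem \ref{ji}, namely the chain $H=H\circ H=(a\circ H)\circ H=(H\circ H)\circ a=H\circ a$. Your derivation of $H\circ H=H$ directly from the hypothesis (rather than from a left identity, which the corollary does not assume) is exactly the right way to fill in the one detail the paper leaves implicit.
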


\begin{theorem}
\label{ki}If $H$ is an intra-regular LA-semihypergroup with pure left
identity $e$, then $(B\circ H)\circ B=B\cap H,$ where $B$ is a bi-$($%
generalized bi-$)$ hyperideal of $H$.
\end{theorem}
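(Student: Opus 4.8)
The plan is to prove the two inclusions of $(B\circ H)\circ B=B$; since $B\subseteq H$ we have $B\cap H=B$, so this is exactly what the statement asks. The inclusion $(B\circ H)\circ B\subseteq B$ holds by the defining property of a (generalized) bi-hyperideal and needs no work, so the whole content lies in proving $B\subseteq(B\circ H)\circ B$, and this is where intra-regularity and the pure left identity $e$ are used.

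Fix $a\in B$ and, by intra-regularity, pick $x,y\in H$ with $a\in(x\circ a^{2})\circ y$. The first move is to establish the weaker membership $a\in B\circ H$. Using $e\circ a=\{a\}$ and the left invertive law, $a^{2}=a\circ a=(e\circ a)\circ a=(a\circ a)\circ e$, hence $x\circ a^{2}=x\circ(a^{2}\circ e)=a^{2}\circ(x\circ e)$ by Lemma~\ref{Lem_b_out}. Therefore $(x\circ a^{2})\circ y=(a^{2}\circ(x\circ e))\circ y=(y\circ(x\circ e))\circ a^{2}$ by the left invertive law, and applying Lemma~\ref{Lem_b_out} to $(y\circ(x\circ e))\circ(a\circ a)$ brings an $a$ to the front: $a\in a\circ\big((y\circ(x\circ e))\circ a\big)\subseteq a\circ H\subseteq B\circ H$.

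The second and decisive move uses this: write $a\in b\circ h$ with $b\in B$, $h\in H$. Two applications of Lemma~\ref{Lem_b_out} then give
\begin{equation*}
x\circ(y\circ a)\subseteq x\circ(y\circ(b\circ h))=x\circ(b\circ(y\circ h))=b\circ(x\circ(y\circ h))\subseteq b\circ H\subseteq B\circ H.
\end{equation*}
On the other hand, starting again from $a\in(x\circ a^{2})\circ y$ and applying the left invertive law, then Lemma~\ref{Lem_b_out} to $y\circ(a\circ a)$, then the left invertive law once more, one obtains $a\in(x\circ(y\circ a))\circ a$. Combining the two facts, $a\in(x\circ(y\circ a))\circ a\subseteq(B\circ H)\circ B$, which is the desired inclusion.

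The step I expect to be the main obstacle is exactly this transition. Any single-step rearrangement of $(x\circ a^{2})\circ y$ — for instance $a\in(a\circ(x\circ a))\circ y$ or $a\in(y\circ(x\circ a))\circ a$ — places $a$ (or an obvious $B$-element) at only one of the two outer slots, leaving the other slot a mere element of $H$, and no further shuffling of that one expression cures this. The resolution is the two-stage structure above: first spend the intra-regular representation to prove the weaker statement $a\in B\circ H$, then feed the factorization $a=b\circ h$ back into the inner occurrence of $a$ so that Lemma~\ref{Lem_b_out} transports the $B$-element $b$ all the way to the outermost left position of $x\circ(y\circ a)$. I would finally remark that the reverse inclusion never uses closure of $B$ under $\circ$, which is precisely why the argument covers bi-hyperideals and generalized bi-hyperideals at once.
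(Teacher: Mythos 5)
Your proof is correct, and it reaches the inclusion $B\subseteq (B\circ H)\circ B$ by a route that differs in organization from the paper's. The paper works with a single long chain: starting from $b\in(y\circ(x\circ b))\circ b$ it re-substitutes the intra-regular expansion $(x\circ b^{2})\circ y$ for the \emph{inner} occurrence of $b$ and then, using the left invertive, medial and paramedial laws together with Lemma~\ref{Lem_b_out}, rewrites the whole expression until it lands on $(b\circ((y^{2}\circ x^{2})\circ b))\circ b\subseteq (B\circ H)\circ B$. You instead split the same ``use intra-regularity twice'' idea into two clean stages: first $a\in a\circ((y\circ(x\circ e))\circ a)\subseteq B\circ H$ (here the pure left identity enters explicitly through $a^{2}=a^{2}\circ e$), and then the factorization $a\in b\circ h$ is transported by Lemma~\ref{Lem_b_out} into the inner slot of $a\in(x\circ(y\circ a))\circ a$, giving $x\circ(y\circ a)\subseteq B\circ H$. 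All the individual manipulations check out at the level of sets (the left invertive law and Lemma~\ref{Lem_b_out} do extend to subsets by the union definition of the hyperoperation, as the paper itself uses freely). What your version buys is brevity and transparency: it avoids the paramedial law altogether, makes visible exactly where each outer slot acquires its $B$-element, and correctly notes that closure of $B$ under $\circ$ is never needed, so the argument covers generalized bi-hyperideals; the paper's chain, while longer, keeps everything inside one displayed computation of the standard form used throughout its other theorems.
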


\begin{proof}
Let $H$ be an intra-regular LA-semihypergroup with pure left identity $e$,
then clearly $(B\circ H)\circ B\subseteq B\cap H$. Now let $b\in $ $B\cap H,$
which implies that $b\in B$ and $b\in H.$ Since $H$ is intra-regular so
there exist $x,y\in H$ such that $b\in (x\circ b^{2})\circ y.$ Now by using
Lemma \ref{Lem_b_out}$,$ left invertive law$,$ paramedial law and medial law$%
,$ we have%
\begin{eqnarray*}
b &\in &(x\circ b^{2})\circ y=(x\circ (b\circ b))\circ y=(b\circ (x\circ
b))\circ y=(y\circ (x\circ b))\circ b \\
&\subseteq &(y\circ (x\circ ((x\circ b^{2})\circ y)))\circ b=(y\circ
((x\circ b^{2})\circ (x\circ y)))\circ b \\
&=&((x\circ b^{2})\circ (y\circ (x\circ y)))\circ b=(((x\circ y)\circ
y)\circ (b^{2}\circ x))\circ b \\
&=&((b\circ b)\circ (((x\circ y)\circ y)\circ x))\circ b=((b\circ b)\circ
((x\circ y)(x\circ y)))\circ b \\
&=&((b\circ b)\circ (x^{2}\circ y^{2}))\circ b=((y^{2}\circ x^{2})\circ
(b\circ b))\circ b \\
&=&(b\circ ((y^{2}\circ x^{2})\circ b))\circ b\subseteq (B\circ H)\circ B.
\end{eqnarray*}%
This shows that $(B\circ H)\circ B=B\cap H.$
\end{proof}

The converse is not true in general. For this, let us consider an
LA-semihypergroup $H=\{x,y,z,w\}$ with the binary hyperoperation defined
below:%
\begin{equation*}
\begin{tabular}{c|cccc}
$\circ $ & $x$ & $y$ & $z$ & $w$ \\ \hline
$x$ & $y$ & $y$ & $\{z,w\}$ & $w$ \\ 
$y$ & $y$ & $y$ & $\{z,w\}$ & $w$ \\ 
$z$ & $\{z,w\}$ & $\{z,w\}$ & $z$ & $w$ \\ 
$w$ & $w$ & $w$ & $w$ & $w$%
\end{tabular}%
\end{equation*}%
Clearly $H$ is an LA-semihypergroup because the elements of $H$ satisfies
the left invertive law. It is easy to see that $\left\{ z,w\right\} $ is a
bi-$($generalized bi-$)$ hyperideal of $H$ such that $(B\circ H)\circ
B=B\cap H$ but $H$ has no pure left identity and also $H$ is not an
intra-regular because $x\in H$ is not an intra-regular.

\begin{corollary}
If $H$ is an intra-regular LA-semihypergroup with pure left identity, then $%
(B\circ H)\circ B=B,$ where $B$ is a bi-$($generalized bi-$)$ hyperideal of $%
H$.
\end{corollary}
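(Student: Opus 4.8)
The plan is to derive this corollary directly from Theorem \ref{ki} together with the fact that any bi-(generalized bi-)hyperideal $B$ of $H$ is by definition a (non-empty) subset of $H$. Since $H$ is an intra-regular LA-semihypergroup with pure left identity $e$, Theorem \ref{ki} applies verbatim and yields
\begin{equation*}
(B\circ H)\circ B=B\cap H .
\end{equation*}
Because $B\subseteq H$, we have $B\cap H=B$, and substituting this into the displayed equality gives $(B\circ H)\circ B=B$, which is exactly the assertion.

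So there is really no obstacle here: the entire content is already contained in Theorem \ref{ki}, and the corollary is just the specialization obtained by simplifying $B\cap H$ to $B$ using $B\subseteq H$. If one wanted to be fully self-contained one could instead re-run the computation in the proof of Theorem \ref{ki} — starting from an arbitrary $b\in B$, writing $b\in(x\circ b^{2})\circ y$ by intra-regularity, and then repeatedly applying Lemma \ref{Lem_b_out}, the left invertive law, the paramedial law, and the medial law to rewrite $b$ as an element of $(b\circ((y^{2}\circ x^{2})\circ b))\circ b\subseteq(B\circ H)\circ B$ — together with the trivial inclusion $(B\circ H)\circ B\subseteq B$ coming from $B$ being a bi-hyperideal; but invoking Theorem \ref{ki} is cleaner and is the approach I would take.
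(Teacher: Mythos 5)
Your proposal is correct and is exactly the intended argument: the paper leaves this corollary without proof because it is the immediate specialization of Theorem \ref{ki}, using $B\subseteq H$ to replace $B\cap H$ by $B$. Nothing further is needed.
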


\begin{theorem}
\label{aw}If $H$ is an intra-regular LA-semihypergroup with pure left
identity $e$, then $(H\circ B)\circ H=H\cap B,$ where $B$ is an interior
hyperideal of $H$.
\end{theorem}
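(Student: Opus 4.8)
The plan is to prove the two inclusions $(H\circ B)\circ H\subseteq H\cap B$ and $H\cap B\subseteq(H\circ B)\circ H$ separately, in the spirit of the proof of Theorem \ref{ki}. For the first inclusion, since $B$ is an interior hyperideal of $H$, the containment $(H\circ B)\circ H\subseteq B$ holds by definition, and $(H\circ B)\circ H\subseteq H$ is automatic; hence $(H\circ B)\circ H\subseteq H\cap B$. (Note that $B\subseteq H$, so $H\cap B$ is just $B$; the intersection is written only to keep the statement parallel to Theorem \ref{ki}.)

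For the reverse inclusion, I would take an arbitrary $b\in H\cap B$ and use that $H$ is intra-regular: there exist $x,y\in H$ with $b\in(x\circ b^{2})\circ y$, where $b^{2}=b\circ b$. The key observation is that $B$ is a sub LA-semihypergroup, so $b^{2}=b\circ b\subseteq B$; consequently $x\circ b^{2}\subseteq H\circ B$ and therefore $b\in(x\circ b^{2})\circ y\subseteq(H\circ B)\circ H$. This yields $H\cap B\subseteq(H\circ B)\circ H$, and combining the two inclusions gives $(H\circ B)\circ H=H\cap B$.

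I do not expect a genuine obstacle here: unlike in Theorem \ref{ki}, no rearrangement using the medial, left invertive or paramedial laws or Lemma \ref{Lem_b_out} is needed for the nontrivial inclusion, because once one notices $b^{2}\subseteq B$ the intra-regular representation of $b$ already lies inside $(H\circ B)\circ H$ with the brackets in the required positions. The only point to check, and it is routine, is that the displayed containments between products of subsets are legitimate, i.e.\ that the hyperoperation on subsets respects the unions involved. (One could, if desired, instead first simplify $(H\circ B)\circ H$ to $B\circ H$ via $H\circ H=H$, the medial law and Lemma \ref{Lem_b_out}, and argue from there, but that is a longer route to the same conclusion.)
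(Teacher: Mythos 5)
Your proof is correct, but the nontrivial inclusion is handled by a genuinely different (and shorter) route than the paper's. The paper never invokes $b^{2}\subseteq B$: starting from $b\in(x\circ b^{2})\circ y$ it writes $x=e\circ x$ and applies the paramedial and left invertive laws to arrive at $b\in(((x\circ e)\circ b)\circ b)\circ y\subseteq(H\circ B)\circ H$, an expression in which only a single factor $b$ is required to lie in $B$. You instead use that an interior hyperideal is, by the paper's definition, a sub LA-semihypergroup, so $b\circ b\subseteq B$ and the intra-regular expression $(x\circ b^{2})\circ y$ already sits inside $(H\circ B)\circ H$ with no rearrangement; this is simpler, and as a by-product your argument never uses the pure left identity (nor the paramedial law, whose derivation depends on it), so it establishes the statement under weaker hypotheses. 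What the paper's computation buys is robustness: it would still work for a ``generalized'' interior hyperideal not assumed closed under the hyperoperation, where your appeal to $b^{2}\subseteq B$ would fail. One caution, shared equally by the paper: the easy inclusion $(H\circ B)\circ H\subseteq B$ is not literally what the printed definition of interior hyperideal says (it reads $(T\circ H)\circ T\subseteq T$), but the paper itself uses $(H\circ B)\circ H\subseteq B$ both here and in the later equivalence theorem, so your ``by definition'' step is consistent with the definition as the paper actually intends and employs it.
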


\begin{proof}
Let $H$ be an intra-regular LA-semihypergroup with left identity $e$, then
clearly $(H\circ B)\circ H\subseteq H\cap B$. Now let $b\in H\cap B,$ which
implies that $b\in H$ and $b\in B.$ Since $H$ is an intra-regular so there
exist $x,y\in H$ such that $b\in (x\circ b^{2})\circ y.$ Now by using
paramedial law and left invertive law$,$ we have%
\begin{eqnarray*}
b &\in &(x\circ b^{2})\circ y=((e\circ x)\circ (b\circ b))\circ y=((b\circ
b)\circ (x\circ e))\circ y \\
&=&(((x\circ e)\circ b)\circ b)\circ y\subseteq (H\circ B)\circ H.
\end{eqnarray*}%
Which shows that $(H\circ B)\circ H=H\cap B.$
\end{proof}

The converse is not true in general. For this, let us consider an
LA-semihypergroup $H=\{e,x,y,z,w\}$ with the binary hyperoperation defined
below:%
\begin{equation*}
\begin{tabular}{c|ccccc}
$\circ $ & $e$ & $x$ & $y$ & $z$ & $w$ \\ \hline
$e$ & $e$ & $x$ & $y$ & $z$ & $w$ \\ 
$x$ & $y$ & $z$ & $z$ & $\{z,w\}$ & $w$ \\ 
$y$ & $x$ & $z$ & $z$ & $\{z,w\}$ & $w$ \\ 
$z$ & $z$ & $\{z,w\}$ & $\{z,w\}$ & $\{z,w\}$ & $w$ \\ 
$w$ & $w$ & $w$ & $w$ & $w$ & $w$%
\end{tabular}%
\end{equation*}%
Clearly $H$ is an LA-semihypergroup because the elements of $H$ satisfies
the left invertive law. It is easy to see that $\left\{ z,w\right\} $ is an
interior hyperideal of $H$ with pure left identity $e$ such that $(H\circ
\left\{ z,w\right\} )\circ H=\left\{ z,w\right\} \cap H$ but $H$ is not an
intra-regular because $x\in H$ is not an intra-regular.

\begin{corollary}
If $H$ is an intra-regular LA-semihypergroup with pure left identity, then $%
(H\circ B)\circ H=B,$ where $B$ is an interior hyperideal of $H$.
\end{corollary}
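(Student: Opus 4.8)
The plan is to read this off directly from Theorem~\ref{aw}. Under exactly the present hypotheses -- $H$ an intra-regular LA-semihypergroup with pure left identity $e$, and $B$ an interior hyperideal of $H$ -- Theorem~\ref{aw} already gives the identity $(H\circ B)\circ H=H\cap B$. So the only work left is to simplify the right-hand side.

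First I would observe that $B$, being an interior hyperideal of $H$, is by definition a non-empty \emph{subset} of $H$; hence $H\cap B=B$. Substituting this into the conclusion of Theorem~\ref{aw} yields $(H\circ B)\circ H=B$, as desired. (This is the same trivial reduction $H\cap B=B$ that passes from Theorem~\ref{ki} to its corollary, now applied with interior hyperideals in place of bi-hyperideals.)

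There is essentially no obstacle: the statement is a formal corollary, and the one thing worth verifying is that the hypotheses invoked match those of Theorem~\ref{aw} verbatim -- intra-regularity of $H$ and the existence of a pure left identity -- which they do. If one wanted a self-contained argument instead of citing Theorem~\ref{aw}, the inclusion $(H\circ B)\circ H\subseteq H\cap B$ is immediate from $B\subseteq H$ and the definition of an interior hyperideal, while the reverse inclusion is obtained exactly as in the proof of Theorem~\ref{aw}: for $b\in B$ pick $x,y\in H$ with $b\in(x\circ b^{2})\circ y$ and rewrite, using the paramedial and left invertive laws, as $b\in(((x\circ e)\circ b)\circ b)\circ y\subseteq(H\circ B)\circ H$.
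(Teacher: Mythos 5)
Your proposal is correct and matches the paper's (implicit) argument: the corollary is read off from Theorem~\ref{aw} by the trivial observation that an interior hyperideal $B$ satisfies $B\subseteq H$, so $H\cap B=B$. Nothing further is needed.
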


Let $H$ be an LA-semihypergroup, then $\emptyset \neq A\subseteq H$ is
called semiprime if $a^{2}\subseteq A\ $implies $a\in A.$

\begin{theorem}
\label{intr}An LA-semihypergroup $H$ with pure left identity is
intra-regular if $L\cup R=L\circ R,$ where $L$ and $R$ are the left and
right hyperideals of $H$ respectively such that $R$ is semiprime.
\end{theorem}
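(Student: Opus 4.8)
The plan is to derive the intra-regularity of an arbitrary element $a\in H$ from a single instance of the hypothesis, namely the one obtained by taking $L=H\circ a^{2}$ as the left hyperideal and $R=H$ as the (semiprime) right hyperideal.

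The first task is to check that $L=H\circ a^{2}$ really is a left hyperideal. Since $e$ is a pure left identity, the corollary to Lemma~\ref{ImpLem} gives $H=H\circ e=H\circ H$, and moreover $e\circ A=A$ for every non-empty $A\subseteq H$ (because $e\circ c=\{c\}$ for each $c\in H$). Applying the medial law to subsets,
\[
H\circ(H\circ a^{2})=(H\circ e)\circ(H\circ a^{2})=(H\circ H)\circ(e\circ a^{2})=H\circ a^{2},
\]
so in particular $H\circ(H\circ a^{2})\subseteq H\circ a^{2}$, i.e.\ $L$ is a left hyperideal. Next, $R=H$ is trivially a right hyperideal, and it is semiprime, since $b^{2}\subseteq H$ holds for every $b\in H$ and the conclusion $b\in H$ is then automatic. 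Thus $(L,R)$ is an admissible pair in the hypothesis.

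Feeding this pair into $L\cup R=L\circ R$ yields $(H\circ a^{2})\cup H=(H\circ a^{2})\circ H$; the left side collapses to $H$ because $H\circ a^{2}\subseteq H$, so $H=(H\circ a^{2})\circ H$. Since $(H\circ a^{2})\circ H=\bigcup_{x,y\in H}(x\circ a^{2})\circ y$, the element $a\in H$ lies in $(x\circ a^{2})\circ y$ for some $x,y\in H$; that is, $a$ is intra-regular, and as $a$ was arbitrary, $H$ is intra-regular.

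The only non-routine step is the computation in the second paragraph showing that $H\circ a^{2}$ is a left hyperideal — once that is in place the theorem follows by pure substitution into the hypothesis and unwinding the definition of intra-regular. (Alternatively, with $L=H\circ a$ one gets $H=(H\circ a)\circ H$; using the medial law and Lemma~\ref{Lem_b_out} to rewrite $(H\circ a)\circ H=a\circ H$ then gives $a\circ H=H$ for all $a\in H$, whence $H$ is intra-regular by Theorem~\ref{ji}. This variant works equally well but costs one extra identity.) I do not expect the semiprimality clause to present any obstacle in this direction: $R=H$ satisfies it automatically, so it plays no active role here and would become the load-bearing hypothesis only for a converse-type statement.
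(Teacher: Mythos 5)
Your proof is correct, but it takes a genuinely different route from the paper. You instantiate the hypothesis at the pair $L=H\circ a^{2}$, $R=H$: your verification that $H\circ a^{2}$ is a left hyperideal (via $H=H\circ e$, the medial law and $e\circ a^{2}=a^{2}$) is sound, $H$ is trivially a semiprime right hyperideal, and then $H=(H\circ a^{2})\cup H=(H\circ a^{2})\circ H$ gives $a\in (x\circ a^{2})\circ y$ for some $x,y\in H$ immediately. The paper instead instantiates at $L=H\circ a$ and $R=a^{2}\circ H$ (noting $a^{2}\circ H=H\circ a^{2}$ by the paramedial law), invokes semiprimeness of that specific right hyperideal to get $a\in a^{2}\circ H$, and then runs a longer chain of left invertive, medial and paramedial manipulations together with Lemma \ref{Lem_b_out} to show $(H\circ a)\circ (a^{2}\circ H)\subseteq (H\circ a^{2})\circ H$. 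What your choice buys is brevity, and it also exposes that, under the universal reading of the hypothesis ("for all left hyperideals $L$ and all semiprime right hyperideals $R$"), the semiprimeness clause does no work, since $R=H$ satisfies it vacuously — so the theorem as stated is weaker than its phrasing suggests; what the paper's choice buys is that it keeps the semiprime condition and the nontrivial hyperideals $H\circ a$, $a^{2}\circ H$ engaged, matching the pattern of the related equivalence theorems in the same section (where such pairs are the ones that actually arise). Your parenthetical variant via $a\circ H=H$ and Theorem \ref{ji} is also valid, since that theorem only needs a left identity.
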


\begin{proof}
Let $H$ be an LA-semihypergroup with pure left identity, then clearly $%
H\circ a$ and $a^{2}\circ H$ are the left and right hyperideals of $H$ such
that $a\in H\circ a$ and $a^{2}\subseteq a^{2}\circ H,$ because by using
paramedial law, we have%
\begin{equation*}
a^{2}\circ H=(a\circ a)\circ (H\circ H)=(H\circ H)\circ (a\circ a)=H\circ
a^{2}.
\end{equation*}%
Therefore by given assumption, $a\in a^{2}\circ H$. Now by using left
invertive law, medial law, paramedial law and Lemma \ref{Lem_b_out}, we have%
\begin{eqnarray*}
a &\in &H\circ a\cup a^{2}\circ H=(H\circ a)\circ (a^{2}\circ H) \\
&=&(H\circ a)\circ ((a\circ a)\circ H)=(H\circ a)\circ ((H\circ a)\circ
(e\circ a)) \\
&\subseteq &(H\circ a)\circ ((H\circ a)\circ (H\circ a))=(H\circ a)\circ
((H\circ H)\circ (a\circ a)) \\
&\subseteq &(H\circ a)\circ ((H\circ H)\circ (H\circ a))=(H\circ a)\circ
((a\circ H)\circ (H\circ H)) \\
&=&(H\circ a)\circ ((a\circ H)\circ H)=(a\circ H)\circ ((H\circ a)\circ H) \\
&=&(a\circ (H\circ a))\circ (H\circ H)=(a\circ (H\circ a))\circ H \\
&=&(H\circ (a\circ a))\circ H=(H\circ a^{2})\circ H.
\end{eqnarray*}%
Which shows that $H$ is intra-regular.
\end{proof}

\begin{lemma}
\label{jk}If $H$ is an intra-regular LA-semihypergroup, then $H=H^{2}$.
\end{lemma}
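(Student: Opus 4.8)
If $H$ is an intra-regular LA-semihypergroup, then $H = H^{2}$.

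The plan is to show the two inclusions $H^{2} \subseteq H$ and $H \subseteq H^{2}$. The first is immediate: for any $x, y \in H$, the hyperoperation gives $x \circ y \in \mathcal{P}^{\ast}(H)$, so $x \circ y \subseteq H$, and taking the union over all $x, y \in H$ yields $H^{2} = H \circ H \subseteq H$. The content of the lemma lies in the reverse inclusion, and here I would use the defining property of intra-regularity directly.

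For the reverse inclusion, let $a \in H$ be arbitrary. Since $H$ is intra-regular, there exist $x, y \in H$ with $a \in (x \circ a^{2}) \circ y$. Now $x \circ a^{2} = x \circ (a \circ a) \subseteq H \circ H = H^{2}$, and then $(x \circ a^{2}) \circ y \subseteq H^{2} \circ H$. Since $H^{2} \subseteq H$, we get $H^{2} \circ H \subseteq H \circ H = H^{2}$. Chaining these, $a \in (x \circ a^{2}) \circ y \subseteq H^{2} \circ H \subseteq H^{2}$. As $a$ was arbitrary, $H \subseteq H^{2}$, and combined with the first inclusion this gives $H = H^{2}$.

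I do not expect any real obstacle here; the argument is a short membership chase using only that a hyperoperation lands in non-empty subsets and the definition of intra-regular. The only point that deserves a line of care is the step $H^{2} \circ H \subseteq H^{2}$, which I would justify by noting $H^{2} \circ H \subseteq H \circ H = H^{2}$ rather than invoking any identity or structural law; in particular no left identity, medial law, or paramedial law is needed. One could alternatively phrase the whole thing as $a \in (x \circ a^{2}) \circ y \subseteq (H \circ H) \circ H \subseteq H \circ H = H^{2}$, which is perhaps the cleanest single line.
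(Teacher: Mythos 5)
Your proof is correct, and since the paper simply states that the proof of this lemma is straightforward, your membership chase $a \in (x\circ a^{2})\circ y \subseteq (H\circ H)\circ H \subseteq H\circ H = H^{2}$, together with the trivial inclusion $H^{2}\subseteq H$, is exactly the intended argument. No gap; nothing further needed.
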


\begin{proof}
The proof straightforward.
\end{proof}

\begin{theorem}
For a left invertible LA-semihypergroup $H$ with pure left identity and $%
e=a^{^{\prime }}\circ a$, the following conditions are equivalent.

$(i)$ $H$ is intra-regular.

$(ii)$ $R\cap L=R\circ L,$ where $R$ and $L$ are any right and left
hyperideals of $H$ respectively.
\end{theorem}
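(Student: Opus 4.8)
The plan is to prove the two implications separately. For $(i)\Rightarrow(ii)$, the inclusion $R\circ L\subseteq R\cap L$ is immediate: since $R$ is a right hyperideal, $R\circ L\subseteq R\circ H\subseteq R$, and since $L$ is a left hyperideal, $R\circ L\subseteq H\circ L\subseteq L$. For the reverse inclusion I would take $a\in R\cap L$ and first record two easy facts: $a^{2}=a\circ a\subseteq R\circ H\subseteq R$ and $a^{2}=a\circ a\subseteq H\circ L\subseteq L$, so that $x\circ a^{2}\subseteq H\circ L\subseteq L$ for every $x\in H$, while $a\circ(x\circ y^{2})\subseteq R\circ H\subseteq R$ for all $x,y\in H$. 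Intra-regularity furnishes $x,y\in H$ with $a\in(x\circ a^{2})\circ y$, and the aim is to rewrite this outer product so that its left factor lands inside $R$ and its right factor inside $L$ simultaneously.

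The decisive manoeuvre is a double substitution followed by one use of the left invertive law. Substituting $a\in(x\circ a^{2})\circ y$ into the right-hand $a$ of $a^{2}=a\circ a$ and applying Lemma \ref{Lem_b_out} gives $a^{2}\subseteq(x\circ a^{2})\circ(a\circ y)$. Feeding this back into the $a^{2}$ occurring in $a\in(x\circ a^{2})\circ y$ and using Lemma \ref{Lem_b_out} again yields $a\in((x\circ a^{2})\circ(x\circ(a\circ y)))\circ y$. The left invertive law now moves $y$ inward: $a\in(y\circ(x\circ(a\circ y)))\circ(x\circ a^{2})$. Finally, repeated application of Lemma \ref{Lem_b_out} collapses the left factor, $y\circ(x\circ(a\circ y))=x\circ(y\circ(a\circ y))=x\circ(a\circ y^{2})=a\circ(x\circ y^{2})$, so that $a\in(a\circ(x\circ y^{2}))\circ(x\circ a^{2})$. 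Since $a\circ(x\circ y^{2})\subseteq R$ and $x\circ a^{2}\subseteq L$ by the observations above, $a\in R\circ L$. This proves $R\cap L\subseteq R\circ L$, hence equality; note this direction uses only the pure left identity and intra-regularity, not invertibility.

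For $(ii)\Rightarrow(i)$, fix $a\in H$. Using the pure left identity, the hypothesis $e=a^{\prime}\circ a$, and the left invertive law, $a=e\circ a=(a^{\prime}\circ a)\circ a=(a\circ a)\circ a^{\prime}=a^{2}\circ a^{\prime}\subseteq a^{2}\circ H$; also $a=e\circ a\in H\circ a$. As in the proof of Theorem \ref{intr} (using $H\circ H=H$, which holds since $H$ has a pure left identity, together with the left invertive and paramedial laws), $a^{2}\circ H$ is a right hyperideal with $a^{2}\circ H=H\circ a^{2}$, and $H\circ a$ is a left hyperideal. Applying $(ii)$ with $R=a^{2}\circ H$ and $L=H\circ a$ gives
$a\in(a^{2}\circ H)\cap(H\circ a)=(a^{2}\circ H)\circ(H\circ a)=(H\circ a^{2})\circ(H\circ a)\subseteq(H\circ a^{2})\circ H=\bigcup_{x,y\in H}(x\circ a^{2})\circ y$,
so there exist $x,y\in H$ with $a\in(x\circ a^{2})\circ y$, i.e. $a$ is intra-regular. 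As $a$ was arbitrary, $H$ is intra-regular.

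The routine parts are the two trivial inclusions, the identities $a=a^{2}\circ a^{\prime}$ and $a^{2}\circ H=H\circ a^{2}$, and the verification that $a^{2}\circ H$ and $H\circ a$ are hyperideals. The only real work is the double-substitution computation in $(i)\Rightarrow(ii)$: the obstacle is that a single application of the left invertive law after Lemma \ref{Lem_b_out} places only one of the two factors correctly (the right factor $a\in L$, say, but the left factor $y\circ(x\circ a)$ need not lie in $R$), which is precisely why the element $a$ must be reinserted into its own intra-regular expression before the final rearrangement so that both factors are controlled at once.
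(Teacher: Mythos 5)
Your proof is correct, but it takes a genuinely different route from the paper's in both implications, so a comparison is worthwhile. In $(i)\Rightarrow(ii)$ the paper argues globally: from $a\in (x\circ a^{2})\circ y\subseteq (H\circ a^{2})\circ H$ it rewrites $(H\circ a^{2})\circ H$ as $(H\circ a)\circ (H\circ a)$ and then bounds this by $(H\circ R)\circ (H\circ L)=((R\circ H)\circ H)\circ (H\circ L)\subseteq R\circ L$; you instead re-substitute $a$ into its own intra-regular expression and arrive at the explicit containment $a\in \bigl(a\circ (x\circ y^{2})\bigr)\circ (x\circ a^{2})\subseteq R\circ L$. Both use the same toolkit (pure left identity via Lemma \ref{Lem_b_out}, the left invertive and medial laws), and, as you note, neither needs invertibility in this direction; your per-element computation is a bit longer but more concrete, and it parallels the style of the paper's proof of Theorem \ref{ki}. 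In $(ii)\Rightarrow(i)$ the divergence is more substantial: the paper takes $R=L=a^{2}\circ H$, derives $a^{2}\subseteq ((x\circ a^{2})\circ a)\circ a$, and then uses $e=a'\circ a$ in a cancellation-style step (right-multiplying by $a'$ and applying the left invertive law twice) to extract $a\in (x\circ a^{2})\circ a$; you take $R=a^{2}\circ H$, $L=H\circ a$, use $e=a'\circ a$ only to get $a\in a^{2}\circ a'\subseteq a^{2}\circ H$, and then $(ii)$ gives $a\in R\circ L\subseteq (H\circ a^{2})\circ H$, which is intra-regularity immediately. Your version is arguably cleaner, since it avoids the delicate passage where the paper writes set equalities that should really be memberships/inclusions and needs the cancellation argument; the only implicit point in both your argument and the paper's is that the element-wise identities (left invertive, medial, paramedial, Lemma \ref{Lem_b_out}) extend to non-empty subsets, which is standard because hyperproducts of sets are unions of hyperproducts of elements.
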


\begin{proof}
$(i)\Longrightarrow (ii):$ Assume that $H\ $is an intra-regular
LA-semihypergroup$\ $with pure left identity and let $a\in H,$ then there
exist $x,y\in H$ such that $a\in (x\circ a^{2})\circ y.$ Let $R$ and $L$ be
any right and left hyperideals of $H$ respectively, then obviously $R\circ
L\subseteq R\cap L.$ Now let $a\in R\cap L$ implies that $a\in R$ and $a\in
L.$ Now by using medial law, left invertive law and Lemma \ref{Lem_b_out},
we have%
\begin{eqnarray*}
a &\in &(x\circ a^{2})\circ y\in (H\circ a^{2})\circ H=(H\circ (a\circ
a))\circ H \\
&=&(a\circ (H\circ a))\circ H=(a\circ (H\circ a))\circ (H\circ H) \\
&=&(a\circ H)\circ ((H\circ a)\circ H)=(H\circ a)\circ ((a\circ H)\circ H) \\
&=&(H\circ a)\circ ((H\circ H)\circ a)=(H\circ a)\circ (H\circ a) \\
&\subseteq &(H\circ R)\circ (H\circ L)=((H\circ H)\circ R)\circ (H\circ L) \\
&=&((R\circ H)\circ H)\circ (H\circ L)\subseteq R\circ L.
\end{eqnarray*}%
This shows that $R\cap L=R\circ L.$

$(ii)\Longrightarrow (i):$ Let $H$ be a left invertible LA-semihypergroup
with pure left identity, then for $a\in H$ there exists $a^{^{\prime }}\in H$
such that $e=a^{^{\prime }}\circ a.$ Since $a^{2}\circ H$ is a right
hyperideal and also a left hyperideal of $H\ $such that $a^{2}\subseteq
a^{2}\circ H$, therefore by using given assumption, medial law, left
invertive law and Lemma \ref{Lem_b_out}, we have%
\begin{eqnarray*}
a^{2} &\subseteq &a^{2}\circ H\cap a^{2}\circ H=(a^{2}\circ H)\circ
(a^{2}\circ H)=a^{2}\circ ((a^{2}\circ H)\circ H) \\
&=&a^{2}\circ ((H\circ H)\circ a^{2})=(a\circ a)\circ (H\circ
a^{2})=((H\circ a^{2})\circ a)\circ a.
\end{eqnarray*}%
Thus we get, $a^{2}\subseteq ((x\circ a^{2})\circ a)\circ a$ for some $x\in
H.$

Now by using left invertive law, we have%
\begin{eqnarray*}
(a\circ a)\circ a^{^{\prime }} &=&(((x\circ a^{2})\circ a)\circ a)\circ
a^{^{\prime }} \\
(a^{^{\prime }}\circ a)\circ a &=&(a^{^{\prime }}\circ a)\circ (((x\circ
a^{2})\circ a) \\
a &=&(x\circ a^{2})\circ a.
\end{eqnarray*}%
This shows that $H$ is intra-regular.
\end{proof}

\begin{lemma}
\label{ideal}Every two-sided hyperideal of an intra-regular
LA-semihypergroup $H$ with left identity is idempotent.
\end{lemma}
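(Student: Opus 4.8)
The plan is to prove the two inclusions $I^{2}\subseteq I$ and $I\subseteq I^{2}$ for an arbitrary two-sided hyperideal $I$ of the intra-regular LA-semihypergroup $H$ with left identity $e$, using monotonicity of $\circ$ with respect to set inclusion throughout. The first inclusion is immediate and uses no regularity: since $I$ is a right hyperideal, $I^{2}=I\circ I\subseteq I\circ H\subseteq I$.

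For the reverse inclusion, fix $a\in I$. By intra-regularity there exist $x,y\in H$ with $a\in (x\circ a^{2})\circ y=(x\circ (a\circ a))\circ y$, and the whole point is to rewrite this hyperproduct so that it is visibly contained in $I\circ I$. Using $x\in e\circ x$ and then the medial law, $x\circ (a\circ a)\subseteq (e\circ x)\circ (a\circ a)=(e\circ a)\circ (x\circ a)$; applying the left invertive law to $((e\circ a)\circ (x\circ a))\circ y$ then gives $(y\circ (x\circ a))\circ (e\circ a)$. (If one prefers to use that $e$ is a pure left identity, Lemma \ref{Lem_b_out} turns $x\circ (a\circ a)$ directly into $a\circ (x\circ a)$ and the left invertive law yields $(y\circ (x\circ a))\circ a$; the bookkeeping that follows is the same.) Now, since $a\in I$ and $I$ is a (left) hyperideal, $x\circ a\subseteq H\circ I\subseteq I$, hence $y\circ (x\circ a)\subseteq H\circ I\subseteq I$, and likewise $e\circ a\subseteq H\circ I\subseteq I$. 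Therefore $a\in (y\circ (x\circ a))\circ (e\circ a)\subseteq I\circ I=I^{2}$, so $I\subseteq I^{2}$.

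Combining the two inclusions yields $I=I^{2}$, i.e. $I$ is idempotent. The only real content is the middle manipulation — massaging $(x\circ a^{2})\circ y$ into a product of two subsets of $I$ — and the step to watch is that each intermediate subset ($x\circ a$, then $y\circ (x\circ a)$, then $e\circ a$) stays inside $I$, which is exactly what the left hyperideal condition provides; the rest is the standard medial/left-invertive rewriting already used repeatedly above.
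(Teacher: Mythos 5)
Your proof is correct: the inclusion $I^{2}\subseteq I$ is immediate from the right hyperideal property, and the rewriting $a\in(x\circ(a\circ a))\circ y\subseteq((e\circ x)\circ(a\circ a))\circ y=((e\circ a)\circ(x\circ a))\circ y=(y\circ(x\circ a))\circ(e\circ a)\subseteq I\circ I$ uses only the medial law, the left invertive law (both of which extend to subsets since the hyperoperation on sets is defined by unions), and the left hyperideal condition. The paper dismisses this lemma as straightforward and gives no argument, so your write-up simply supplies the intended details; it is also careful to use only a left identity ($x\in e\circ x$) rather than a pure one, which matches the stated hypothesis.
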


\begin{proof}
The proof is straightforward.
\end{proof}

\begin{theorem}
In an LA-semihypergroup $H$ with left identity, the following conditions are
equivalent.

$(i)$ $H$ is intra-regular.

$(ii)$ $A=(H\circ A)^{2},$ where $A$ is any left hyperideal of $H$.
\end{theorem}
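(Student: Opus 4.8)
The plan is to prove the two implications separately, relying throughout on the medial and left invertive laws, on $H\circ H=H$ (Lemma \ref{ImpLem}), and — for $(ii)\Rightarrow(i)$ — on the fact, already used in the proof of Theorem \ref{intr}, that $H\circ a$ is a left hyperideal of $H$ for every $a$.

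For $(i)\Rightarrow(ii)$, let $A$ be a left hyperideal. The first step is the identity $(H\circ A)^{2}=(H\circ H)\circ(A\circ A)=H\circ A^{2}$, which follows from one application of the medial law together with $H\circ H=H$. Since $A\subseteq H$ and $A$ is a left hyperideal, $A^{2}=A\circ A\subseteq H\circ A\subseteq A$; moreover $A^{2}$ is a two-sided hyperideal of $H$, so $H\circ A^{2}\subseteq A^{2}\subseteq A$, which gives $(H\circ A)^{2}\subseteq A$. For the reverse inclusion I would take $a\in A$ and use intra-regularity to get $x,y\in H$ with $a\in(x\circ a^{2})\circ y$; inserting the left identity and applying the medial law, $a\in(x\circ a^{2})\circ(e\circ y)=(x\circ e)\circ(a^{2}\circ y)$. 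Because $a\in A$ forces $a^{2}\subseteq A^{2}$ and $A^{2}$ is a hyperideal, $a^{2}\circ y\subseteq A^{2}\circ H\subseteq A^{2}$, so $(x\circ e)\circ(a^{2}\circ y)\subseteq H\circ A^{2}=(H\circ A)^{2}$. Hence $A\subseteq(H\circ A)^{2}$ and equality holds.

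For $(ii)\Rightarrow(i)$, fix $a\in H$ and apply the hypothesis to the left hyperideal $A=H\circ a$, which contains $a$ since $a\in e\circ a$. Using Lemma \ref{Lem_b_out}, the left invertive law and $H\circ H=H$ one first checks $H\circ(H\circ a)=H\circ a$, so the hypothesis yields $H\circ a=(H\circ(H\circ a))^{2}=(H\circ a)^{2}=(H\circ H)\circ(a\circ a)=H\circ a^{2}$. By the paramedial law (exactly as in the proof of Theorem \ref{intr}) $H\circ a^{2}=a^{2}\circ H$, and then the left invertive law gives $a\in H\circ a=a^{2}\circ H=(a\circ a)\circ H=(H\circ a)\circ a=(H\circ a^{2})\circ a$. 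Thus $a\in(h\circ a^{2})\circ a$ for some $h\in H$, i.e. $a$ is intra-regular with $x=h$ and $y=a$; since $a$ was arbitrary, $H$ is intra-regular.

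I expect the main work to be in $(ii)\Rightarrow(i)$: establishing that $H\circ a$ is a left hyperideal with $H\circ(H\circ a)=H\circ a$, and then carefully reducing $(H\circ(H\circ a))^{2}$ to $H\circ a^{2}$, since both computations require choosing the right sequence of medial, paramedial, left invertive and Lemma \ref{Lem_b_out} rewrites (and implicitly use that the left identity acts as a pure left identity). The forward direction is essentially routine once the identity $(H\circ A)^{2}=H\circ A^{2}$ is available.
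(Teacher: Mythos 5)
Your proof is correct, but it takes a more explicit route than the paper's. In the direction $(i)\Rightarrow(ii)$ the paper simply invokes Lemma \ref{ideal} to get $(H\circ A)^{2}=H\circ A\subseteq A$ and $A=A\circ A\subseteq H\circ A$, whereas you compute $(H\circ A)^{2}=H\circ A^{2}$ by the medial law and use that $A^{2}$ is a two-sided hyperideal together with intra-regularity of each $a\in A$; both work, yours just replaces the appeal to the (unproved, ``straightforward'') idempotency lemma by a direct element computation. The real divergence is in $(ii)\Rightarrow(i)$: the paper only observes $A=(H\circ A)^{2}\subseteq A^{2}$ and asserts that idempotency of every left hyperideal yields intra-regularity, without justification, while you instantiate the hypothesis at the principal left hyperideal $A=H\circ a$ (which contains $a$), show $H\circ a=(H\circ a)^{2}=H\circ a^{2}=a^{2}\circ H$, and then extract $a\in(a\circ a)\circ H=(H\circ a)\circ a=(H\circ a^{2})\circ a$ via the left invertive law, exhibiting $x,y$ with $a\in(x\circ a^{2})\circ y$ explicitly; this actually fills in a step the paper leaves open, and it buys a self-contained converse. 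One caveat applies to both arguments: the theorem is stated for a mere left identity, but Lemma \ref{Lem_b_out} and the paramedial law (which you need for $H\circ(H\circ a)\subseteq H\circ a$ and for $H\circ a^{2}=a^{2}\circ H$, the latter exactly as in Theorem \ref{intr}) are established in the paper only under a pure left identity; you flag this yourself, and the paper's own proof has the same hidden dependence, so it is a defect of the stated hypotheses rather than a gap specific to your argument.
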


\begin{proof}
$(i)$ $\Longrightarrow (ii):$ Let $A$ be a left hyperideal of an
intra-regular LA-semihypergroup $H$ with left identity$,$ then $H\circ
A\subseteq A$ and by Lemma \ref{ideal}, $(H\circ A)^{2}=H\circ A\subseteq A.$
Now $A=A\circ A\subseteq H\circ A=(H\circ A)^{2},$ which implies that $%
A=(H\circ A)^{2}.$

$(ii)$ $\Longrightarrow (i):$ Let $A$ be a left hyperideal of $H,$ then $%
A=(H\circ A)^{2}\subseteq A^{2},$ which implies that $A$ is idempotent,
hence $H$ is intra-regular.
\end{proof}

\begin{theorem}
In an intra-regular LA-semihypergroup $H$ with pure left identity, the
following conditions are equivalent.

$(i)$ $A$ is a bi-(generalized bi-) hyperideal of $H$.

$(ii)$ $(A\circ H)\circ A=A$ and $A^{2}=A.$
\end{theorem}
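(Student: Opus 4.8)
The plan is to get the easy implication out of the way and then isolate the one step that requires an idea. The implication $(ii)\Longrightarrow(i)$ is immediate: $(A\circ H)\circ A=A$ gives in particular $(A\circ H)\circ A\subseteq A$, so $A$ is a generalized bi-hyperideal, and $A^{2}=A$ gives $A\circ A=A\subseteq A$, so $A$ is also a sub LA-semihypergroup; hence $A$ is a bi-(generalized bi-)hyperideal of $H$. For $(i)\Longrightarrow(ii)$, the equality $(A\circ H)\circ A=A$ is exactly the Corollary following Theorem \ref{ki} (whose proof uses only the inclusion $(A\circ H)\circ A\subseteq A$ together with intra-regularity and the pure left identity, hence applies to generalized bi-hyperideals as well). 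So the entire burden is to prove $A^{2}=A$, and the key observation I would use is that in this setting $A$ is in fact a left hyperideal, i.e. $A=H\circ A$.

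To prove $A=H\circ A$ I would not argue pointwise but transform the set identity $A=(A\circ H)\circ A$ directly, using $H\circ H=H$ (Lemma \ref{ImpLem}), Lemma \ref{Lem_b_out}, the left invertive law and the medial law, roughly as follows:
\begin{eqnarray*}
A &=&(A\circ H)\circ A=(A\circ (H\circ H))\circ A=(H\circ (A\circ H))\circ A \\
&=&(A\circ (A\circ H))\circ H=(A\circ (A\circ H))\circ (H\circ H) \\
&=&(A\circ H)\circ ((A\circ H)\circ H)=(A\circ H)\circ (H\circ A) \\
&=&H\circ ((A\circ H)\circ A)=H\circ A,
\end{eqnarray*}
where $(A\circ H)\circ H=(H\circ H)\circ A=H\circ A$ by the left invertive law, and the final equality is the Corollary to Theorem \ref{ki} once more.

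With $A=H\circ A$ in hand, $A^{2}=A$ is the usual ideal-style argument. Given $a\in A$, intra-regularity provides $x,y\in H$ with $a\in (x\circ a^{2})\circ y$; then by Lemma \ref{Lem_b_out} and the left invertive law
\begin{equation*}
a\in (x\circ a^{2})\circ y=(x\circ (a\circ a))\circ y=(a\circ (x\circ a))\circ y=(y\circ (x\circ a))\circ a.
\end{equation*}
Since $A=H\circ A$, we have $x\circ a\subseteq H\circ A=A$ and therefore $y\circ (x\circ a)\subseteq H\circ A=A$, so $(y\circ (x\circ a))\circ a\subseteq A\circ A=A^{2}$, which gives $A\subseteq A^{2}$; conversely $A^{2}=A\circ A\subseteq H\circ A=A$. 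Hence $A^{2}=A$, and together with $(A\circ H)\circ A=A$ this completes $(i)\Longrightarrow(ii)$.

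The genuine obstacle is the identity $A=H\circ A$. A bi-hyperideal carries no left-ideal information a priori, so the naive approach of taking $a\in A$, expanding it by intra-regularity and rearranging keeps producing expressions whose left factor lies in $A\circ H$ rather than in $A$, and one never manages to fold it into $A^{2}$. The remedy is to work with the set equation $(A\circ H)\circ A=A$ instead: rewriting $H=H\circ H$ and driving the medial and left invertive laws through it collapses the right-hand side to $H\circ A$, after which $A$ has quietly become a left hyperideal and the rest is routine.
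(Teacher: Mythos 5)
Your proposal is correct, but it reaches $A^{2}=A$ by a genuinely different route than the paper. The paper proves $(i)\Longrightarrow(ii)$ by two long element-wise chains: it re-derives $A\subseteq (A\circ H)\circ A$ from intra-regularity (essentially repeating the computation of Theorem \ref{ki}), and then establishes $A\subseteq A^{2}$ by an extended rewriting of $a\in (x\circ a^{2})\circ y$ that ends in $((A\circ H)\circ A)\circ A\subseteq A\circ A$. You instead delegate the first equality to the Corollary of Theorem \ref{ki} (legitimately, and your observation that its proof only uses the inclusion $(B\circ H)\circ B\subseteq B$ is accurate), and then extract from the set identity $A=(A\circ H)\circ A$ the structural fact $A=H\circ A$, after which a three-line argument gives $A^{2}=A$. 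Your set computation is sound: each step uses Lemma \ref{ImpLem}, Lemma \ref{Lem_b_out}, the left invertive law or the medial law, and all of these element-wise identities pass to arbitrary non-empty subsets because the hyperproduct of sets is the union of the element-wise hyperproducts (the paper itself uses them this way, e.g.\ in Theorems \ref{1} and \ref{intr}), and Lemma \ref{Lem_b_out} is available since the identity is pure. What your route buys is brevity and an extra structural insight: every bi-(generalized bi-)hyperideal of an intra-regular LA-semihypergroup with pure left identity is in fact a left hyperideal with $A=H\circ A$, so one could even finish by quoting Lemma \ref{ag} and Lemma \ref{ideal} instead of your closing element argument. What the paper's brute-force computation buys is independence from the Corollary and from any intermediate claim, at the cost of length. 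Both arguments are valid; yours is the cleaner of the two.
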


\begin{proof}
$(i)\Longrightarrow (ii):$ Let $A$ be a bi-hyperideal of an intra-regular
LA-semihypergroup $H$ with pure left identity$,$ then $(A\circ H)\circ
A\subseteq A$. Let $a\in A$, then since $H$ is intra-regular so there exist $%
x,$ $y\in H$ such that $a\in (x\circ a^{2})\circ y.$ Now by using medial$,$
left invertive law and Lemma \ref{Lem_b_out}, we have%
\begin{eqnarray*}
a &\in &(x\circ a^{2})\circ y=(x\circ (a\circ a))\circ y=(a\circ (x\circ
a))\circ y=(y\circ (x\circ a))\circ a \\
&=&(y\circ (x\circ ((x\circ a^{2})\circ y)))\circ a=(y\circ ((x\circ
a^{2})\circ (x\circ y)))\circ a \\
&=&((x\circ a^{2})\circ (y\circ (x\circ y)))\circ a=((x\circ (a\circ
a))\circ (y\circ (x\circ y)))\circ a \\
&=&((a\circ (x\circ a))\circ (y\circ (x\circ y)))\circ a=((a\circ y)\circ
((x\circ a)\circ (x\circ y)))\circ a \\
&=&((x\circ a)\circ ((a\circ y)\circ (x\circ y)))\circ a=((x\circ a)\circ
((a\circ x)\circ y^{2}))\circ a \\
&=&((y^{2}\circ (a\circ x))\circ (a\circ x))\circ a=(a\circ ((y^{2}\circ
(a\circ x))\circ x))\circ a\subseteq (A\circ H)\circ A.
\end{eqnarray*}%
Thus $(A\circ H)\circ A=A$ holds. Now by using left invertive law$,$
paramedial law, medial law and and Lemma \ref{Lem_b_out}$,$ we have%
\begin{eqnarray*}
a &\in &(x\circ a^{2}\circ )\circ y=(x\circ (a\circ a))\circ y=(a\circ
(x\circ a))\circ y=(y\circ (x\circ a))\circ a \\
&=&(y\circ (x\circ ((x\circ a^{2})\circ y)))\circ a=(y\circ ((x\circ
a^{2})\circ (x\circ y)))\circ a \\
&=&((x\circ a^{2})\circ (y\circ (x\circ y)))\circ a=((x\circ (a\circ
a))\circ (y\circ (x\circ y)))\circ a \\
&=&((a\circ (x\circ a))\circ (y\circ (x\circ y)))\circ a=(((y\circ (x\circ
y))\circ (x\circ a))\circ a)\circ a \\
&=&(((a\circ x)\circ ((x\circ y)\circ y))\circ a)\circ a=(((a\circ x)\circ
(y^{2}\circ x))\circ a)\circ a \\
&=&(((a\circ y^{2})\circ (x\circ x))\circ a)\circ a=(((a\circ y^{2}\circ
)\circ x^{2})\circ a)\circ a \\
&=&(((x^{2}\circ y^{2})\circ a)\circ a)\circ a=(((x^{2}\circ y^{2})\circ
((x\circ (a\circ a))\circ y))\circ a)\circ a \\
&=&(((x^{2}\circ y^{2})\circ ((a\circ (x\circ a))\circ y))\circ a)\circ
a=(((x^{2}\circ (a\circ (x\circ a)))\circ (y^{2}\circ y))\circ a)\circ a \\
&=&(((a\circ (x^{2}\circ (x\circ a)))\circ y^{3})\circ a)\circ a=(((a\circ
((x\circ x)\circ (x\circ a)))\circ y^{3})\circ a)\circ a \\
&=&(((a\circ ((a\circ x)\circ (x\circ x)))\circ y^{3})\circ a)\circ
a=((((a\circ x)\circ (a\circ x^{2}))\circ y^{3})\circ a)\circ a \\
&=&((((a\circ a)\circ (x\circ x^{2}))\circ y^{3})\circ a)\circ
a=(((y^{3}\circ x^{3})\circ (a\circ a))\circ a)\circ a \\
&=&((a\circ ((y^{3}\circ x^{3})\circ a))\circ a)\circ a\subseteq ((A\circ
H)\circ A)\circ A\subseteq A\circ A=A^{2}.
\end{eqnarray*}%
Hence $A=A^{2}$ holds.

$(ii)\Longrightarrow (i)$ is obvious.
\end{proof}

\begin{theorem}
In an intra-regular LA-semihypergroup $H$ with pure left identity, the
following conditions are equivalent.

$(i)$ $A$ is a quasi hyperideal of $H$.

$(ii)$ $H\circ Q\cap Q\circ H=Q.$
\end{theorem}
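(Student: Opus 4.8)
The plan is to dispatch $(ii)\Longrightarrow(i)$ almost immediately and to concentrate the work on $(i)\Longrightarrow(ii)$. If $H\circ Q\cap Q\circ H=Q$, then since intersection is commutative we get $Q\circ H\cap H\circ Q=Q\subseteq Q$, so $Q$ (which is non-empty) is a quasi hyperideal. For $(i)\Longrightarrow(ii)$, the inclusion $H\circ Q\cap Q\circ H\subseteq Q$ is just the defining property $Q\circ H\cap H\circ Q\subseteq Q$ of a quasi hyperideal read with the two intersected sets swapped, so everything reduces to proving the reverse inclusion, i.e. $Q\subseteq H\circ Q$ and $Q\subseteq Q\circ H$. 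For this I would fix $a\in Q$ and invoke intra-regularity to get $x,y\in H$ with $a\in(x\circ a^{2})\circ y$, and then manipulate this membership.

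The containment $Q\subseteq H\circ Q$ should be the easy half. Writing $a\in(x\circ a^{2})\circ y=(x\circ(a\circ a))\circ y$, one application of Lemma \ref{Lem_b_out} rewrites $x\circ(a\circ a)$ as $a\circ(x\circ a)$, and then the left invertive law turns $(a\circ(x\circ a))\circ y$ into $(y\circ(x\circ a))\circ a$. Since $y\circ(x\circ a)\subseteq H$ and $a\in Q$, this exhibits $a$ inside $H\circ a\subseteq H\circ Q$.

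The containment $Q\subseteq Q\circ H$ is where the difficulty lies, since the obvious rewrites of $(x\circ a^{2})\circ y$ keep moving $a$ to the right of the outer product and thus only ever produce $H\circ Q$; a genuinely different target is needed. My plan is first to prove $a\in a^{2}\circ H$: from $a\in(x\circ a^{2})\circ y\subseteq(H\circ a^{2})\circ H$, replace the trailing $H$ by $H\circ H$, apply the medial law to reach $(H\circ H)\circ(a^{2}\circ H)=H\circ(a^{2}\circ H)$, and then collapse this with Lemma \ref{Lem_b_out} to $a^{2}\circ(H\circ H)=a^{2}\circ H$. Then, using $a\in Q$ on the \emph{left} factor of $a^{2}=a\circ a$, one gets $a^{2}\circ H\subseteq(H\circ Q)\circ H$, and the very same medial-plus-Lemma \ref{Lem_b_out} computation gives $(H\circ Q)\circ H=(H\circ H)\circ(Q\circ H)=H\circ(Q\circ H)=Q\circ H$. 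Hence $a\in Q\circ H$, which together with the previous paragraph yields $Q\subseteq H\circ Q\cap Q\circ H$ and finishes the proof.

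The main obstacle will be purely organizational: as in the proofs of Theorems \ref{ki} and \ref{intr}, every step is being applied to subsets rather than single elements, so each use of the medial law, the left invertive law and Lemma \ref{Lem_b_out} must be read as ``apply to each choice of the free elements and take the union''. I would isolate the set identity $(H\circ A)\circ H=A\circ H$ (valid for any subset $A$, via $H=H\circ H$, the medial law and Lemma \ref{Lem_b_out}) and the resulting fact $a\in a^{2}\circ H$ as the load-bearing pieces, after which the bi-directional inclusion is routine.
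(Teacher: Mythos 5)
Your proposal is correct, and for the harder direction it takes a mildly different route than the paper. The coinciding parts: $(ii)\Rightarrow(i)$ is immediate in both, and your proof that $a\in H\circ Q$ (rewrite $(x\circ a^{2})\circ y$ as $(a\circ(x\circ a))\circ y$ via Lemma \ref{Lem_b_out}, then as $(y\circ(x\circ a))\circ a$ via the left invertive law) is exactly the paper's computation. The difference lies in how $a\in Q\circ H$ is obtained. The paper first proves the two inclusions $H\circ Q\subseteq Q\circ H$ and $Q\circ H\subseteq H\circ Q$ by separate element-wise chains on $p\circ q$ and $q\circ p$ (again using intra-regularity of $q$), concludes $H\circ Q=Q\circ H$, and then gets membership in $Q\circ H$ for free from $a\in H\circ Q$. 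You instead prove the membership directly: $a\in a^{2}\circ H$ (via $H=H\circ H$, the medial law and Lemma \ref{Lem_b_out}, all of which do extend to subsets by taking unions, as you note), and then $a^{2}\circ H\subseteq(H\circ Q)\circ H=Q\circ H$ by the same three tools. This is a little shorter and isolates two reusable facts ($a\in a^{2}\circ H$ and $(H\circ A)\circ H=A\circ H$), whereas the paper's detour yields the stronger by-product $H\circ Q=Q\circ H$ (its argument in fact shows $H\circ A=A\circ H$ for any non-empty subset $A$ of an intra-regular LA-semihypergroup with pure left identity), which your argument does not establish and which the final equality $H\circ Q\cap Q\circ H=Q$ does not recover. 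One cosmetic slip: to land in $(H\circ Q)\circ H$ you need $a\in Q$ in the \emph{second} factor of $a\circ a$ (the first factor staying in $H$); you say ``left factor'', which would give $(Q\circ H)\circ H$ and would not feed the displayed computation $(H\circ Q)\circ H=(H\circ H)\circ(Q\circ H)=Q\circ H$. Since $a\circ a$ is contained in both $H\circ Q$ and $Q\circ H$, this is only a wording issue, not a gap.
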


\begin{proof}
$(i)\Longrightarrow (ii):$ Let $Q$ be a quasi hyperideal of an intra-regular
LA-semihypergroup $H$ with pure left identity$,$ then $H\circ Q\cap Q\circ
H\subseteq Q$. Let $q\in Q$, then since $H$ is intra-regular so there exist $%
x,y\in H$ such that $q\in (x\circ q^{2})\circ y.$ Let $p\circ q\subseteq
H\circ Q,$ then by using medial law, paramedial law and Lemma \ref{Lem_b_out}%
$,$ we have%
\begin{eqnarray*}
p\circ q &\subseteq &p\circ ((x\circ q^{2})\circ y)=(x\circ q^{2})\circ
(p\circ y) \\
&=&(x\circ (q\circ q))\circ (p\circ y)=(q\circ (x\circ q))\circ (p\circ y) \\
&=&(q\circ p)\circ ((x\circ q)\circ y)=(x\circ q)\circ ((q\circ p)\circ y) \\
&=&(y\circ (q\circ p))\circ (q\circ x)=q\circ ((y\circ (q\circ p))\circ
x)\subseteq Q\circ H.
\end{eqnarray*}%
Now let $q\circ y\in Q\circ H,$ then by using medial law, paramedial law and
Lemma \ref{Lem_b_out}$,$ we have%
\begin{eqnarray*}
q\circ p &\subseteq &((x\circ q^{2})\circ y)\circ p=(p\circ y)\circ (x\circ
q^{2})=(p\circ y)\circ (x\circ (q\circ q)) \\
&=&x\circ ((p\circ y)\circ (q\circ q))=x\circ ((q\circ q)\circ (y\circ p)) \\
&=&(q\circ q)\circ (x\circ (y\circ p))=((x\circ (y\circ p))\circ q)\circ
q\subseteq H\circ Q.
\end{eqnarray*}%
Hence $Q\circ H=H\circ Q.$ As by using left invertive law$,$ medial law and
Lemma \ref{Lem_b_out}$,$ we have%
\begin{eqnarray*}
q &\in &(x\circ q^{2})\circ y=(x\circ (q\circ q))\circ y=(q\circ (x\circ
q))\circ y \\
&=&(y\circ (x\circ q))\circ q\subseteq H\circ Q.
\end{eqnarray*}%
Thus $q\in H\circ Q\cap Q\circ H$ implies that $H\circ Q\cap Q\circ H=Q$.

$(ii)\Longrightarrow (i)$ is obvious.
\end{proof}

\begin{theorem}
In an intra-regular LA-semihypergroup $H$ with pure left identity, the
following conditions are equivalent.

$(i)$ $A$ is an interior hyperideal of $H$.

$(ii)$ $(H\circ A)\circ H=A.$
\end{theorem}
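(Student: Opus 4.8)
The plan is to prove the two implications separately. For $(i)\Longrightarrow(ii)$ there is almost nothing to do: if $A$ is an interior hyperideal of the intra-regular LA-semihypergroup $H$ with pure left identity, then the corollary to Theorem~\ref{aw} -- which asserts $(H\circ B)\circ H=B$ for every interior hyperideal $B$, being Theorem~\ref{aw} together with $B\subseteq H$ -- gives $(H\circ A)\circ H=A$ immediately. So the real content of the theorem is the reverse implication $(ii)\Longrightarrow(i)$, and for that direction intra-regularity will in fact not be needed, only the pure left identity.

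For $(ii)\Longrightarrow(i)$ I would first isolate two auxiliary identities, valid for every non-empty $X\subseteq H$ in an LA-semihypergroup with pure left identity $e$, since they are the engine of the argument:
\begin{equation*}
(H\circ X)\circ H=X\circ H \qquad\text{and}\qquad H\circ(H\circ X)=H\circ X .
\end{equation*}
The first is obtained by writing the right-hand $H$ as $e\circ H$, applying the medial law to get $(H\circ X)\circ(e\circ H)=(H\circ e)\circ(X\circ H)=H\circ(X\circ H)$ (using $H\circ e=H$), and then Lemma~\ref{Lem_b_out} to rewrite $H\circ(X\circ H)=X\circ(H\circ H)=X\circ H$. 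The second is obtained by writing the leftmost $H$ as $e\circ H$ and applying the paramedial law: $(e\circ H)\circ(H\circ X)=(X\circ H)\circ(H\circ e)=(X\circ H)\circ H$, which by the left invertive law and $H\circ H=H$ equals $(H\circ H)\circ X=H\circ X$.

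With these in hand the rest is short. Apply them with $X=A$ and use the hypothesis $A=(H\circ A)\circ H$: the first identity gives $A=A\circ H$, and $A\circ H=((H\circ A)\circ H)\circ H=(H\circ H)\circ(H\circ A)=H\circ(H\circ A)=H\circ A$ (the second equality by the left invertive law, the last by the second identity). Hence $A=A\circ H=H\circ A$. Since $H\circ A\subseteq H\circ H=H$, the second identity now gives
\begin{equation*}
A\circ A=(H\circ A)\circ(H\circ A)\subseteq H\circ(H\circ A)=H\circ A=A ,
\end{equation*}
so $A$ is a sub LA-semihypergroup of $H$; and $(A\circ H)\circ A=A\circ A\subseteq A$ because $A\circ H=A$. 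Therefore $A$ is an interior hyperideal of $H$.

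The step I expect to be the main obstacle is pinning the auxiliary identities down correctly, in particular the chain $((H\circ X)\circ H)\circ H=H\circ X$: since $\circ$ is neither associative nor commutative, one has to be deliberate about which of the left invertive, medial and paramedial laws and Lemma~\ref{Lem_b_out} to invoke, and in which order, keeping careful track of the bracketing after each step. Once $A\circ H=H\circ A=(H\circ A)\circ H$ has been established, verifying the two defining properties of an interior hyperideal is entirely routine.
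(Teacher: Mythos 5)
Your proposal is correct, but it distributes the work differently from the paper. For $(i)\Longrightarrow(ii)$ the paper does not cite its earlier corollary; it re-runs the intra-regularity computation: starting from $a\in(x\circ a^{2})\circ y$ it rewrites, via the left invertive, medial and paramedial laws and Lemma \ref{Lem_b_out}, the element $a$ as a member of $(((y\circ(x\circ a^{2}))\circ x)\circ a)\circ y\subseteq(H\circ A)\circ H$, which together with $(H\circ A)\circ H\subseteq A$ gives equality. Your shortcut of invoking the corollary to Theorem \ref{aw} is legitimate, since that corollary states exactly $(H\circ B)\circ H=B$ for interior hyperideals $B$, so this direction is the same mathematical content packaged more economically. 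The genuine added value of your write-up is in $(ii)\Longrightarrow(i)$, which the paper dismisses as ``obvious'': you supply a complete argument, and your two auxiliary identities $(H\circ X)\circ H=X\circ H$ and $H\circ(H\circ X)=H\circ X$ check out (medial law with $H=e\circ H=H\circ e$, paramedial law, left invertive law, $H\circ H=H$), yielding $A=A\circ H=H\circ A$ and hence both $A\circ A\subseteq A$ and $(A\circ H)\circ A\subseteq A$, i.e.\ the sub-LA-semihypergroup condition as well as the interior condition from the definition. Your observation that this direction uses only the pure left identity and no intra-regularity is correct and is a point the paper leaves implicit; if you want a slightly shorter route to the key fact $H\circ A=A$, note that Lemma \ref{Lem_b_out} applied to the hypothesis gives directly $H\circ A=H\circ((H\circ A)\circ H)=(H\circ A)\circ(H\circ H)=(H\circ A)\circ H=A$.
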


\begin{proof}
$(i)$ $\Longrightarrow (ii):$ Let $A$ be an interior hyperideal of an
intra-regular LA-semihypergroup $H$ with pure left identity$,$ then $(H\circ
A)\circ H\subseteq A$. Let $a\in A$, then since $H$ is intra-regular so
there exist $x,y\in H$ such that $a\in (x\circ a^{2})\circ y.$ Now by using
left invertive law, medial law, paramedial law and Lemma \ref{Lem_b_out}$,$
we have%
\begin{eqnarray*}
a &\in &(x\circ a^{2})\circ y=(x\circ (a\circ a))\circ y=(a\circ (x\circ
a))\circ y \\
&=&(y\circ (x\circ a))\circ a=(y\circ (x\circ a))\circ ((x\circ a^{2})\circ
y) \\
&=&(((x\circ a^{2})\circ y)\circ (x\circ a))\circ y=((a\circ x)\circ (y\circ
(x\circ a^{2})))\circ y \\
&=&(((y\circ (x\circ a^{2}))\circ x)\circ a)\circ y\subseteq (H\circ A)\circ
H.
\end{eqnarray*}%
Thus $(H\circ A)\circ H=A.$

$(ii)$ $\Longrightarrow (i)$ is obvious.
\end{proof}

\begin{theorem}
In an intra-regular LA-semihypergroup $H$ with pure left identity, the
following conditions are equivalent.

$(i)$ $A$ is a $(1,2)$-hyperideal of $H$.

$(ii)$ $(A\circ H)\circ A^{2}=A$ and $A^{2}=A$ .
\end{theorem}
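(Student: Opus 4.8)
The proof will follow the two-step template of the preceding theorems on bi-, quasi- and interior hyperideals, so I expect the substance to lie entirely in $(i)\Longrightarrow(ii)$. The implication $(ii)\Longrightarrow(i)$ I would dispose of immediately: if $(A\circ H)\circ A^{2}=A$ and $A^{2}=A$, then for $p,q\in A$ we get $p\circ q\subseteq A\circ A=A^{2}=A$, so $A$ is a sub LA-semihypergroup, and $(A\circ H)\circ A^{2}=A$ in particular yields $(A\circ H)\circ A^{2}\subseteq A$, whence $A$ is a $(1,2)$-hyperideal.

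For $(i)\Longrightarrow(ii)$, assume $A$ is a $(1,2)$-hyperideal and fix $a\in A$; by intra-regularity write $a\in(x\circ a^{2})\circ y$ with $x,y\in H$. First I would establish $(A\circ H)\circ A^{2}=A$. Here $(A\circ H)\circ A^{2}\subseteq A$ is just the definition of a $(1,2)$-hyperideal, so only $A\subseteq(A\circ H)\circ A^{2}$ needs work. Starting from $a\in(x\circ a^{2})\circ y$, the plan is: use Lemma \ref{Lem_b_out} to rewrite $x\circ a^{2}=x\circ(a\circ a)=a\circ(x\circ a)$, then the left invertive law to push a factor to the end, then substitute the intra-regular expression for $a$ back in once, and then shuffle with the medial law, the paramedial law and Lemma \ref{Lem_b_out}. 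This chain (mirroring the first display of the bi-hyperideal theorem) terminates at $a\in(a\circ(x^{2}\circ y^{2}))\circ a^{2}$, which lies in $(A\circ H)\circ A^{2}$ because $a\in A$, $x^{2}\circ y^{2}\subseteq H$ and $a^{2}=a\circ a\subseteq A^{2}$. Hence $A\subseteq(A\circ H)\circ A^{2}$ and equality holds.

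Next I would prove $A^{2}=A$. One inclusion, $A^{2}=A\circ A\subseteq A$, is immediate since $A$ is a sub LA-semihypergroup. For $A\subseteq A^{2}$, the plan is to continue the rewriting from $a\in(x\circ a^{2})\circ y$ one layer deeper—substituting the intra-regular expression for $a$ two or three times and reapplying the left invertive, medial, paramedial and $x\circ(y\circ z)=y\circ(x\circ z)$ laws, exactly as in the long second display of the bi-hyperideal theorem—until $a$ is brought to a form such as $a\in((a\circ h)\circ a^{2})\circ a$ for some $h\in H$ (or, more generally, to a product of two blocks each visibly inside $A$). Once in that shape, the $(1,2)$-hyperideal property $(A\circ H)\circ A^{2}\subseteq A$ gives $(a\circ h)\circ a^{2}\subseteq A$, and then $((a\circ h)\circ a^{2})\circ a\subseteq A\circ A=A^{2}$, so $a\in A^{2}$.

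The main obstacle is precisely this last computation. In the bi-hyperideal case the analogous chain terminates at $((a\circ h)\circ a)\circ a$ and invokes the stronger containment $(A\circ H)\circ A\subseteq A$; here only $(A\circ H)\circ A^{2}\subseteq A$ is available, so the rewriting must be steered so that a genuine square $a^{2}$—two copies of $a$—occupies the middle slot at the instant the hyperideal property is applied, not a single copy. Finding the right order of substitutions and applications of the four laws that produces this configuration, while making sure that no step silently presupposes $A^{2}=A$, is the delicate point; the remainder is bookkeeping, and $(ii)\Longrightarrow(i)$ is routine.
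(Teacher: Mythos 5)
Your proposal follows essentially the same route as the paper: its first chain terminates exactly where you predicted, at $(a\circ(x^{2}\circ y^{2}))\circ(a\circ a)\subseteq(A\circ H)\circ A^{2}$, and its second chain ends at $((a\circ(x^{3}\circ y^{3}))\circ(a\circ a))\circ a\subseteq((A\circ H)\circ A^{2})\circ A\subseteq A\circ A=A^{2}$, which is precisely the shape $((a\circ h)\circ a^{2})\circ a$ you identified as the target, so the delicate configuration you worried about is attainable and the $(1,2)$-hyperideal property is applied exactly as you describe. The only difference is that you leave the explicit sequence of left invertive, medial, paramedial and Lemma~\ref{Lem_b_out} rewritings as a plan, whereas the paper writes it out in full; the strategy, the key inclusions, and the treatment of $(ii)\Longrightarrow(i)$ coincide with the published proof.
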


\begin{proof}
$(i)$ $\Longrightarrow (ii):$ Let $A$ be a $(1,2)$-hyperideal of an
intra-regular LA-semihypergroup $H$ with pure left identity$,$ then $(A\circ
H)\circ A^{2}\subseteq A$ and $A^{2}\subseteq A$. Let $a\in A$, then since $%
H $ is intra-regular so there exist $x,y\in H$ such that $a\in (x\circ
a^{2})\circ y.$ Now by using left invertive law, medial law, paramedial law
and Lemma \ref{Lem_b_out}$,$ we have%
\begin{eqnarray*}
a &\in &(x\circ a^{2})\circ y=(x\circ (a\circ a))\circ y=(a\circ (x\circ
a))\circ y \\
&=&(y\circ (x\circ a))\circ a=(y\circ (x\circ ((x\circ a^{2})\circ y)))\circ
a \\
&=&(y\circ ((x\circ a^{2})\circ (x\circ y)))\circ a=((x\circ a^{2})\circ
(y\circ (x\circ y)))\circ a \\
&=&(((x\circ y)\circ y)\circ (a^{2}\circ x))\circ a=((y^{2}\circ x)\circ
(a^{2}\circ x))\circ a \\
&=&(a^{2}\circ ((y^{2}\circ x)\circ x))\circ a=(a^{2}\circ (x^{2}\circ
y^{2}))\circ a \\
&=&(a\circ (x^{2}\circ y^{2}))\circ a^{2}=(a\circ (x^{2}\circ y^{2}))\circ
(a\circ a)\subseteq (A\circ H)\circ A^{2}.
\end{eqnarray*}%
Thus $(A\circ H)\circ A^{2}=A.$ Now by using left invertive law, medial law,
paramedial law and Lemma \ref{Lem_b_out}$,$ we have%
\begin{eqnarray*}
a &\in &(x\circ a^{2})\circ y=(x\circ (a\circ a))\circ y=(a\circ (x\circ
a))\circ y=(y\circ (x\circ a))a \\
&=&(y\circ (x\circ a))\circ ((x\circ a^{2})\circ y)=(x\circ a^{2})\circ
((y\circ (x\circ a))\circ y) \\
&=&(x\circ (a\circ a))\circ ((y\circ (x\circ a))\circ y)=(a\circ (x\circ
a))\circ ((y\circ (x\circ a))\circ y) \\
&=&(((y\circ (x\circ a))\circ y)\circ (x\circ a))\circ a=((a\circ x)\circ
(y\circ (y\circ (x\circ a))))\circ a \\
&=&((((x\circ a^{2})\circ y)\circ x)\circ (y\circ (y\circ (x\circ a))))\circ
a \\
&=&(((x\circ y)\circ (x\circ a^{2}))\circ (y\circ (y\circ (x\circ a))))\circ
a \\
&=&(((x\circ y)\circ y)\circ ((x\circ a^{2})\circ (y\circ (x\circ a))))\circ
a \\
&=&((y^{2}\circ x)\circ ((x\circ (a\circ a))\circ (y\circ (x\circ a))))\circ
a \\
&=&((y^{2}\circ x)\circ ((x\circ y)\circ ((a\circ a)\circ (x\circ a))))\circ
a \\
&=&((y^{2}\circ x)\circ ((a\circ a)\circ ((x\circ y)\circ (x\circ a))))\circ
a \\
&=&((a\circ a)\circ ((y^{2}\circ x)\circ ((x\circ y)\circ (x\circ a))))\circ
a \\
&=&((a\circ a)\circ ((y^{2}\circ x)\circ ((x\circ x)\circ (y\circ a))))\circ
a \\
&=&((((x\circ x)\circ (y\circ a))\circ (y^{2}\circ x))\circ (a\circ a))\circ
a \\
&=&((((a\circ y)\circ (x\circ x))\circ (y^{2}\circ x))\circ (a\circ a))\circ
a \\
&=&((((x^{2}\circ y)\circ a)\circ (y^{2}\circ x))\circ (a\circ a))\circ a \\
&=&(((x\circ y^{2})\circ (a\circ (x^{2}\circ y)))\circ (a\circ a))\circ a \\
&=&((a\circ ((x\circ y^{2})\circ (x^{2}\circ y)))\circ (a\circ a))\circ a \\
&=&((a\circ (x^{3}\circ y^{3}))\circ (a\circ a))\circ a\subseteq ((A\circ
H)\circ A^{2})\circ A\subseteq A\circ A=A^{2}.
\end{eqnarray*}%
Hence $A^{2}=A.$

$(ii)$ $\Longrightarrow (i)$ is obvious.
\end{proof}

\begin{lemma}
\label{ag}Every non empty subset $A$ of an intra-regular LA-semihypergroup $%
H $ with pure left identity is a left hyperideal of $H$ if and only if it is
a right hyperideal of $H$.
\end{lemma}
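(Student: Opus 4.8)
The statement is a biconditional, and I would establish the two implications separately, since in the presence of a pure left identity the left and right theories are not symmetric. Fix a nonempty subset $A\subseteq H$. The common device in both directions is intra-regularity of $H$: given $a\in A$ there are $x,y\in H$ with $a\in(x\circ a^{2})\circ y$, after which one rewrites $a\circ h$ (resp. $h\circ a$), for $h\in H$, using the left invertive, medial and paramedial laws together with Lemma~\ref{Lem_b_out}, until the rightmost (resp. leftmost) block of the resulting parenthesization is visibly a subset of $A$. I would also record at the outset that, since $A\subseteq H$, we always have $h\circ a\subseteq H\circ A$ and $a\circ h\subseteq A\circ H$, so that being a left (right) hyperideal is exactly $H\circ A\subseteq A$ (resp. $A\circ H\subseteq A$), and that a one-sided hyperideal $A$ in particular satisfies $A\circ A\subseteq A$.

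For the implication \emph{left hyperideal $\Rightarrow$ right hyperideal}: let $a\in A$, $h\in H$ and pick $x,y$ as above. Starting from $a\circ h\subseteq\big((x\circ a^{2})\circ y\big)\circ h$, the plan is to apply the left invertive law to rewrite the right-hand side as $(h\circ y)\circ(x\circ a^{2})$, then the paramedial law to push the square to the outside, obtaining $(a^{2}\circ y)\circ(x\circ h)$, then the left invertive law twice more, first inside the left factor ($a^{2}\circ y=(a\circ a)\circ y=(y\circ a)\circ a$) and then on the whole product, arriving at $\big((x\circ h)\circ a\big)\circ(y\circ a)$. Now the left block lies in $H$ while $y\circ a\subseteq H\circ A\subseteq A$, so $a\circ h\subseteq H\circ A\subseteq A$, which is what is needed.

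For the converse \emph{right hyperideal $\Rightarrow$ left hyperideal}: let $a\in A$, $h\in H$, pick $x,y$ as above, and start from $h\circ a\subseteq h\circ\big((x\circ a^{2})\circ y\big)$. Here I would apply Lemma~\ref{Lem_b_out} to obtain $(x\circ a^{2})\circ(h\circ y)$, and then Lemma~\ref{Lem_b_out} again inside the left factor, since $x\circ a^{2}=x\circ(a\circ a)=a\circ(x\circ a)$, reaching $\big(a\circ(x\circ a)\big)\circ(h\circ y)$. The left block now satisfies $a\circ(x\circ a)\subseteq A\circ H\subseteq A$ and the right block lies in $H$, so $h\circ a\subseteq A\circ H\subseteq A$, as required.

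There is no single heavy computation here; the content of the lemma is the bookkeeping, and the genuine subtlety is that the two implications cannot be obtained from one another by a left--right flip: the identities that actually do the work — the paramedial law in the first direction and Lemma~\ref{Lem_b_out} in the second — both rely on the pure left identity and are not self-dual, so one must choose the correct rewrite in each case (move $a^{2}$ and a copy of $a$ to the right, versus move a copy of $a$ to the left). The one place an error could creep in is in checking that every intermediate block really has the claimed shape $H\circ A$ or $A\circ H$; in particular the claims $y\circ a\subseteq A$ and $a\circ(x\circ a)\subseteq A$, which use $a\in A$ together with whichever one-sided hyperideal hypothesis is in force, should be verified explicitly.
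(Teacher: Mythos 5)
Your proof is correct; the paper itself gives no argument here (it just says ``The proof is straightforward''), so your write-up actually supplies the missing details. Both chains of containments check out: in the left-to-right direction, $a\circ h\subseteq((x\circ a^{2})\circ y)\circ h=(h\circ y)\circ(x\circ a^{2})=(a^{2}\circ y)\circ(x\circ h)=((y\circ a)\circ a)\circ(x\circ h)=((x\circ h)\circ a)\circ(y\circ a)\subseteq H\circ(H\circ A)\subseteq H\circ A\subseteq A$, and in the converse direction $h\circ a\subseteq(x\circ a^{2})\circ(h\circ y)=(a\circ(x\circ a))\circ(h\circ y)\subseteq(A\circ H)\circ H\subseteq A\circ H\subseteq A$; each application of the left invertive, paramedial laws and Lemma~\ref{Lem_b_out} is legitimate for subsets because the hyperproduct of sets is the union of elementwise products, a convention the paper uses throughout. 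Two small remarks: your diagnosis of the asymmetry is right for the left$\Rightarrow$right direction, which genuinely needs intra-regularity, but the right$\Rightarrow$left direction does not need it at all --- with $H=H\circ H$ (Lemma~\ref{ImpLem}) and the left invertive law one gets $H\circ A=(H\circ H)\circ A=(A\circ H)\circ H\subseteq A\circ H\subseteq A$ directly, which is shorter than your rewrite and uses only the left identity; and the verifications you flag at the end ($y\circ a\subseteq H\circ A\subseteq A$ under the left-hyperideal hypothesis, $a\circ(x\circ a)\subseteq A\circ H\subseteq A$ under the right-hyperideal hypothesis) are exactly the ones your chains already justify, so nothing is left open.
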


\begin{proof}
The proof is straightforward.
\end{proof}

\begin{theorem}
\label{12}In an intra-regular LA-semihypergroup $H$ with pure left identity,
the following conditions are equivalent.

$(i)$ $A$ is a $(1,2)$-hyperideal of $H$.

$(ii)$ $A$ is a two-sided hyperideal of $H.$
\end{theorem}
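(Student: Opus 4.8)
The plan is to prove the two implications separately. The implication $(ii)\Rightarrow(i)$ is purely formal, while $(i)\Rightarrow(ii)$ is where intra-regularity and the pure left identity are used, via the theorem immediately preceding this one.

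For $(ii)\Rightarrow(i)$ I would observe that any two-sided hyperideal $A$ already meets both requirements of a $(1,2)$-hyperideal: it is a sub LA-semihypergroup since $A\circ A\subseteq H\circ A\subseteq A$, and $(A\circ H)\circ A^{2}\subseteq A\circ A^{2}\subseteq A\circ A\subseteq A$, using $A\circ H\subseteq A$ and $A^{2}=A\circ A\subseteq A$. No hypothesis on $H$ beyond the left invertive law enters here.

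For $(i)\Rightarrow(ii)$ I would first apply the preceding theorem, which characterises $(1,2)$-hyperideals of an intra-regular LA-semihypergroup with pure left identity, to get $(A\circ H)\circ A^{2}=A$ and $A^{2}=A$, and hence $(A\circ H)\circ A=A$. By Lemma \ref{ag} it then suffices to show that $A$ is a right hyperideal, and I expect in fact to prove the equality $A\circ H=A$. The argument rests on two preliminary identities. The first is that $A\circ H$ is idempotent: by the medial law, $A^{2}=A$ and $H\circ H=H$ (Corollary to Lemma \ref{ImpLem}), $(A\circ H)\circ(A\circ H)=(A\circ A)\circ(H\circ H)=A^{2}\circ H=A\circ H$. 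The second is $A\circ(A\circ H)=A\circ H$, obtained by writing $A=A\circ A$ and then using the paramedial and left invertive laws:
\[
A\circ(A\circ H)=(A\circ A)\circ(A\circ H)=(H\circ A)\circ(A\circ A)=(H\circ A)\circ A=(A\circ A)\circ H=A\circ H .
\]
With these in hand I would evaluate the set $\bigl((A\circ H)\circ(A\circ H)\bigr)\circ A$ in two ways: collapsing the square on the left by idempotency turns it into $(A\circ H)\circ A=A$, whereas one application of the left invertive law turns it into $\bigl(A\circ(A\circ H)\bigr)\circ(A\circ H)=(A\circ H)\circ(A\circ H)=A\circ H$. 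Equating the two outcomes gives $A=A\circ H$, so $A$ is a right hyperideal, and by Lemma \ref{ag} a left hyperideal, hence a two-sided hyperideal of $H$.

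The only step calling for real insight rather than bookkeeping is this last one: recognising that $\bigl((A\circ H)\circ(A\circ H)\bigr)\circ A$ can be simplified to $A$ along one route and to $A\circ H$ along another. Everything else is systematic use of the medial, paramedial and left invertive laws together with the collapses $A^{2}=A$, $(A\circ H)\circ A=A$ and $H\circ H=H$; since $\circ$ is neither associative nor commutative, the one thing to be careful about is which occurrence of $A$ or of $A\circ H$ each rewriting acts on.
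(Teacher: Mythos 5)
Your proof is correct, and for the main direction it takes a genuinely different route from the paper's. For $(i)\Rightarrow(ii)$ the paper does not invoke the preceding characterization theorem at all: it fixes $a\in A$, takes intra-regularity witnesses $x,y$ with $a\in (x\circ a^{2})\circ y$, and by a very long chain of applications of the left invertive, medial and paramedial laws together with Lemma \ref{Lem_b_out} shows directly that $H\circ a\subseteq (A\circ H)\circ A^{2}\subseteq A$, i.e.\ that $A$ is a \emph{left} hyperideal, and then concludes with Lemma \ref{ag} exactly as you do. You instead harvest $(A\circ H)\circ A^{2}=A$ and $A^{2}=A$ from the preceding theorem (whose proof is independent of the present statement, so there is no circularity), reduce to $(A\circ H)\circ A=A$, and obtain $A\circ H=A$, i.e.\ that $A$ is a \emph{right} hyperideal, via the double evaluation of $\bigl((A\circ H)\circ (A\circ H)\bigr)\circ A$; your auxiliary identities $(A\circ H)\circ (A\circ H)=A\circ H$ and $A\circ (A\circ H)=A\circ H$ check out, and all the set-level uses of the medial, paramedial and invertive laws are legitimate because the element-level identities pass to unions of hyperproducts. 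What your version buys is brevity and reuse of already-proved structure; what the paper's version buys is a self-contained argument (modulo Lemma \ref{ag}) that does not lean on the $(1,2)$-hyperideal characterization. For $(ii)\Rightarrow(i)$ your argument is also marginally simpler than the paper's: the paper rewrites $(a\circ H)\circ(b\circ b)=b\circ((a\circ H)\circ b)$ using Lemma \ref{Lem_b_out} and the pure left identity, whereas your inclusion $(A\circ H)\circ A^{2}\subseteq A\circ A^{2}\subseteq A$ needs only monotonicity and the ideal property, and you also verify explicitly that $A$ is a sub LA-semihypergroup, a point the paper leaves implicit.
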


\begin{proof}
$(i)$ $\Longrightarrow (ii):$ Assume that $H$ is intra-regular
LA-semihypergroup with pure left identity and let $A$ be a $(1,2)$%
-hyperideal of $H$ then, $(A\circ H)\circ A^{2}\subseteq A.$ Let $a\in A$,
then since $H$ is intra-regular so there exist $x,y\in H$ such that $a\in
(x\circ a^{2})\circ y.$ Now by using left invertive law, medial law,
paramedial law and Lemma \ref{Lem_b_out}$,$ we have%
\begin{eqnarray*}
H\circ a &\subseteq &H\circ ((x\circ a^{2})\circ y)=(x\circ a^{2})\circ
(H\circ y)=(x\circ (a\circ a))\circ (H\circ y) \\
&=&(a\circ (x\circ a))\circ (H\circ y)=((H\circ y)\circ (x\circ a))\circ a \\
&=&((H\circ y)\circ (x\circ a))\circ ((x\circ a^{2})\circ y)=(x\circ
a^{2})\circ (((H\circ y)\circ (x\circ a))\circ y) \\
&=&(y\circ ((H\circ y)\circ (x\circ a)))\circ (a^{2}\circ x)=a^{2}\circ
((y\circ ((H\circ y)\circ (x\circ a)))\circ x) \\
&=&(a\circ a)\circ ((y\circ ((H\circ y)\circ (x\circ a)))\circ x)=(x\circ
(y\circ ((H\circ y)\circ (x\circ a))))\circ (a\circ a) \\
&=&(x\circ (y\circ ((a\circ x)\circ (y\circ H))))\circ (a\circ a)=(x\circ
((a\circ x)\circ (y\circ (y\circ H))))\circ (a\circ a) \\
&=&((a\circ x)\circ (x\circ (y\circ (y\circ H))))\circ (a\circ a) \\
&=&((((x\circ a^{2})\circ y)\circ x)\circ (x\circ (y\circ (y\circ H))))\circ
(a\circ a) \\
&=&(((x\circ y)\circ (x\circ a^{2}))\circ (x\circ (y\circ (y\circ H))))\circ
(a\circ a) \\
&=&(((a^{2}\circ x)\circ (y\circ x))\circ (x\circ (y\circ (y\circ H))))\circ
(a\circ a) \\
&=&((((y\circ x)\circ x)\circ a^{2})\circ (x\circ (y\circ (y\circ H))))\circ
(a\circ a) \\
&=&(((y\circ (y\circ H))\circ x)\circ (a^{2}\circ ((y\circ x)\circ x)))\circ
(a\circ a) \\
&=&(((y\circ (y\circ H))\circ x)\circ (a^{2}\circ (x^{2}\circ y)))\circ
(a\circ a) \\
&=&(a^{2}\circ (((y\circ (y\circ H))\circ x)\circ (x^{2}\circ y)))\circ
(a\circ a) \\
&=&((a\circ a)\circ (((y\circ (y\circ H))\circ x)(x^{2}\circ y)))\circ
(a\circ a) \\
&=&(((x^{2}\circ y)\circ ((y\circ (y\circ H))\circ x))\circ (a\circ a))\circ
(a\circ a) \\
&=&(a\circ ((x^{2}\circ y)\circ (((y\circ (y\circ H))\circ x)\circ a)))\circ
(a\circ a)\subseteq (A\circ H)\circ A^{2}\subseteq A.
\end{eqnarray*}%
Hence $A$ is a left hyperideal of $H$ and by Lemma \ref{ag}, $A$ is a
two-sided hyperideal of $H.$

$(ii)$ $\Longrightarrow (i):$ Let $A$ be a two-sided hyperideal of $H$. Let $%
y\in (A\circ H)\circ A^{2},$ then $y\in (a\circ H)\circ b^{2}$ for some $%
a,b\in A.$ Now by using Lemma \ref{Lem_b_out}$,$ we have%
\begin{equation*}
y\in (a\circ H)\circ b^{2}=(a\circ H)\circ (b\circ b)=b\circ ((a\circ
H)\circ b)\subseteq A\circ H\subseteq A.
\end{equation*}%
Hence $(A\circ H)\circ A^{2}\subseteq A$, therefore $A$ is a $(1,2)$%
-hyperideal of $H$.
\end{proof}

\begin{lemma}
\label{qqq}Every non empty subset $A$ of an intra-regular LA-semihypergroup $%
H$ with pure left identity is a two-sided hyperideal of $H$ if and only if
it is a quasi hyperideal of $H$.
\end{lemma}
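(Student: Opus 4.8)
The plan is to prove the two implications separately: the forward one is purely formal, while the reverse one will be reduced to facts already established.

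First I would dispose of the easy direction. If $A$ is a two-sided hyperideal of $H$, then $A\circ H\subseteq A$ and $H\circ A\subseteq A$, hence $A\circ H\cap H\circ A\subseteq A$, which is exactly the condition for $A$ to be a quasi hyperideal. Note that this half uses neither intra-regularity nor the pure left identity.

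For the converse, suppose $A$ is a quasi hyperideal, i.e. $A\circ H\cap H\circ A\subseteq A$. The key point I would invoke is that, for a quasi hyperideal of an intra-regular LA-semihypergroup with pure left identity, one has the equality $A\circ H=H\circ A$; this is precisely what is extracted en route in the proof of the characterization of quasi hyperideals given above (for $p\in H$ and $a\in A$ one substitutes $a\in(x\circ a^{2})\circ y$ from intra-regularity and rearranges $p\circ a\subseteq p\circ((x\circ a^{2})\circ y)$ by Lemma \ref{Lem_b_out}, the medial law, the paramedial law and the left invertive law into a subset of $A\circ H$, and symmetrically for the reverse inclusion). Granting $A\circ H=H\circ A$, the defining inclusion of a quasi hyperideal becomes $A\circ H=A\circ H\cap H\circ A\subseteq A$ and likewise $H\circ A\subseteq A$, so $A$ is simultaneously a right and a left hyperideal, hence a two-sided hyperideal, as required. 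A leaner variant that avoids re-deriving the full equality is to establish just one inclusion, say $H\circ A\subseteq A\circ H$; then $H\circ A\subseteq A\circ H\cap H\circ A\subseteq A$, so $A$ is a left hyperideal, and Lemma \ref{ag} (for subsets of an intra-regular LA-semihypergroup with pure left identity, being a left hyperideal is equivalent to being a right hyperideal) at once promotes $A$ to a two-sided hyperideal.

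The only non-routine ingredient, on either route, is the inclusion $H\circ A\subseteq A\circ H$ (equivalently, the equality $A\circ H=H\circ A$ for quasi hyperideals): one has to move a factor $a$ (or $a^{2}$) to the front of $h\circ a$ after the intra-regular substitution, using the left invertive law, the medial law, the paramedial law and Lemma \ref{Lem_b_out}, organised so that the chain terminates in a set of the shape $a\circ S$ with $S\subseteq H$. There is no conceptual obstacle here; it is the same bookkeeping that recurs throughout the paper, and since that computation is already carried out in the proof of the quasi-hyperideal characterization, I would simply cite it rather than repeat it.
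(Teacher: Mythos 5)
Your proposal is correct, and in fact it supplies an argument where the paper offers none: the authors dismiss Lemma \ref{qqq} with ``the proof is straightforward,'' so there is no written proof to compare against. Your easy direction (two-sided $\Rightarrow$ quasi) is immediate from $A\circ H\cap H\circ A\subseteq A\circ H\subseteq A$, and your converse is sound because the computation you cite from the quasi-hyperideal characterization --- the chain $p\circ a\subseteq p\circ((x\circ a^{2})\circ y)=\cdots=a\circ((y\circ(a\circ p))\circ x)\subseteq A\circ H$ and its mirror image --- never actually uses the quasi-hyperideal hypothesis, only intra-regularity, the pure left identity (through Lemma \ref{Lem_b_out}) and the medial, paramedial and left invertive laws; hence $H\circ A=A\circ H$ holds for \emph{every} non-empty subset $A$, the citation is legitimate, and there is no circularity since that theorem precedes Lemma \ref{qqq} and does not rely on it. Given the equality, $H\circ A=A\circ H\cap H\circ A\subseteq A$ and $A\circ H\subseteq A$ follow from the quasi-hyperideal condition, so $A$ is two-sided; your leaner variant, proving only $H\circ A\subseteq A\circ H$ and then invoking Lemma \ref{ag}, is equally valid (and inherits Lemma \ref{ag}'s own unproved ``straightforward'' status). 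One stylistic suggestion: since the cited theorem is stated for quasi hyperideals, it is worth one explicit sentence observing that the relevant inclusions hold for arbitrary non-empty subsets, so the reader does not suspect you are assuming what is to be proved.
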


\begin{proof}
The proof is straightforward.
\end{proof}

\begin{theorem}
A two-sided hyperideal of an intra-regular LA-semihypergroup $H$ with pure
left identity is minimal if and only if it is the intersection of two
minimal two-sided hyperideals of $H$.

\begin{proof}
Let $H$ be intra-regular LA-semihypergroup and $Q$ be a minimal two-sided
hyperideal of $H$, let $a\in Q$. As $H\circ (H\circ a)\subseteq H\circ a$
and $H\circ (a\circ H)\subseteq a\circ (H\circ H)=a\circ H,$ which shows
that $H\circ a$ and $a\circ H$ are left hyperideals of $H,$ so by Lemma \ref%
{ag}, $H\circ a$ and $a\circ H$ are two-sided hyperideals of $H$. Now%
\begin{eqnarray*}
&&H\circ (H\circ a\cap a\circ H)\cap (H\circ a\cap a\circ H)\circ H \\
&=&H\circ (H\circ a)\cap H\circ (a\circ H)\cap (H\circ a)\circ H\cap (a\circ
H)\circ H \\
&\subseteq &(H\circ a\cap a\circ H)\cap (H\circ a)\circ H\cap H\circ
a\subseteq H\circ a\cap a\circ H.
\end{eqnarray*}%
This implies that $H\circ a\cap a\circ H$ is a quasi hyperideal of $H,$ so
by using \ref{qqq}, $H\circ a\cap a\circ H$ is a two-sided hyperideal of $H$%
. Also since $a\in Q$, we have%
\begin{equation*}
H\circ a\cap a\circ H\subseteq H\circ Q\cap Q\circ H\subseteq Q\cap
Q\subseteq Q\text{.}
\end{equation*}%
Now since $Q$ is minimal, so $H\circ a\cap a\circ H=Q,$ where $H\circ a$ and 
$a\circ H$ are minimal two-sided hyperideals of $H$, let $I$ be any
two-sided hyperideal of $H$ such that $I\subseteq H\circ a,$ then $I\cap
a\circ H\subseteq H\circ a\cap a\circ H\subseteq Q,$ which implies that $%
I\cap a\circ H=Q.$ Thus $Q\subseteq I.$ Therefore, we have%
\begin{equation*}
H\circ a\subseteq H\circ Q\subseteq H\circ I\subseteq I,\text{ gives }H\circ
a=I.
\end{equation*}%
Thus $H\circ a$ is a minimal two-sided hyperideal of $H$. Similarly $a\circ
H $ is a minimal two-sided hyperideal of $H.$

Conversely, let $Q=I\cap J$ be a two-sided hyperideal of $H$, where $I$ and $%
J$ are minimal two-sided hyperideals of $H,$ then by using \ref{qqq}, $Q$ is
a quasi hyperideal of $H$, that is $H\circ Q\cap Q\circ H\subseteq Q.$ Let $%
Q^{^{\prime }}$ be a two-sided hyperideal of $H$ such that $Q^{^{\prime
}}\subseteq Q$, then%
\begin{equation*}
H\circ Q^{^{\prime }}\cap Q^{^{\prime }}\circ H\subseteq H\circ Q\cap Q\circ
H\subseteq Q,\text{ also }H\circ Q^{^{\prime }}\subseteq H\circ I\subseteq I%
\text{ and }Q^{^{\prime }}\circ H\subseteq J\circ H\subseteq J\text{.}
\end{equation*}%
Now 
\begin{eqnarray*}
H\circ (H\circ Q^{^{\prime }}) &=&\left( H\circ H\right) \circ (H\circ
Q^{^{\prime }})=(Q^{^{\prime }}\circ H)\circ \left( H\circ H\right) \\
&=&(Q^{^{\prime }}\circ H)\circ H=\left( H\circ H\right) \circ Q^{^{\prime
}}=H\circ Q^{^{\prime }},
\end{eqnarray*}%
which implies that $H\circ Q^{^{\prime }}$ is a left hyperideal and hence a
two-sided hyperideal by Lemma \ref{ag}. Similarly $Q^{^{\prime }}\circ H$ is
a two-sided hyperideal of $H$.

But since $I$ and $J$ are minimal two-sided hyperideals of $H$, therefore $%
H\circ Q^{^{\prime }}=I$ and $Q^{^{\prime }}\circ H=J.$ But $Q=I\cap J,$
which implies that, $Q=H\circ Q^{^{\prime }}\cap Q^{^{\prime }}\circ
H\subseteq Q^{^{\prime }}.$ This give us $Q=Q^{^{\prime }}$ and hence $Q$ is
minimal.
\end{proof}
\end{theorem}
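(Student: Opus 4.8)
The plan is to treat the two implications separately, in each case constructing the required minimal two-sided hyperideals explicitly from a single chosen element and converting between left/right and two-sided/quasi hyperideals via Lemma~\ref{ag} and Lemma~\ref{qqq}; all algebraic steps are applications of Lemma~\ref{Lem_b_out}, the identity $H\circ H=H$ (Lemma~\ref{ImpLem}, valid since a pure left identity is a left identity), the relation $e\circ x=x$, and the medial, paramedial and left invertive laws. For the forward implication, assume $Q$ is a minimal two-sided hyperideal of $H$ and fix $a\in Q$. First I would record that $H\circ(a\circ H)=a\circ(H\circ H)=a\circ H$ by Lemma~\ref{Lem_b_out}, and that $H\circ(H\circ a)=(H\circ H)\circ(H\circ a)=(a\circ H)\circ(H\circ H)=(a\circ H)\circ H=(H\circ H)\circ a=H\circ a$ by the paramedial and left invertive laws; hence $H\circ a$ and $a\circ H$ are left hyperideals and, by Lemma~\ref{ag}, two-sided hyperideals of $H$. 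A routine computation of the same kind shows $H\circ(H\circ a\cap a\circ H)\cap(H\circ a\cap a\circ H)\circ H\subseteq H\circ a\cap a\circ H$, so $H\circ a\cap a\circ H$ is a quasi hyperideal, hence a two-sided hyperideal by Lemma~\ref{qqq}; it is nonempty because $a^{2}=a\circ a\subseteq H\circ a\cap a\circ H$. Since $a\in Q$ and $Q$ is two-sided, $H\circ a\cap a\circ H\subseteq H\circ Q\cap Q\circ H\subseteq Q$, so minimality of $Q$ forces $Q=H\circ a\cap a\circ H$.

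It remains to show $H\circ a$ and $a\circ H$ are minimal. If $I$ is a two-sided hyperideal with $I\subseteq H\circ a$, choose $b\in I$ and write $b\in h\circ a$ for some $h\in H$; then $b^{2}\subseteq I$ (since $I$ is two-sided), and by the medial law and Lemma~\ref{Lem_b_out} we have $b^{2}\subseteq(h\circ h)\circ(a\circ a)\subseteq H\circ a^{2}=a\circ(H\circ a)\subseteq a\circ H$. Thus $I\cap a\circ H$ is a nonempty two-sided hyperideal contained in $Q$, so minimality of $Q$ gives $I\cap a\circ H=Q$; hence $Q\subseteq I$ and $H\circ a\subseteq H\circ Q\subseteq H\circ I\subseteq I$, that is $I=H\circ a$. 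The symmetric argument proves $a\circ H$ minimal, so $Q=H\circ a\cap a\circ H$ is an intersection of two minimal two-sided hyperideals.

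For the converse, let $Q=I\cap J$ with $I,J$ minimal two-sided hyperideals, and let $Q'$ be any two-sided hyperideal with $Q'\subseteq Q$. Then $H\circ Q'\subseteq H\circ I\subseteq I$ and $Q'\circ H\subseteq J\circ H\subseteq J$; moreover $H\circ(H\circ Q')=(Q'\circ H)\circ H=(H\circ H)\circ Q'=H\circ Q'$ (paramedial and left invertive laws) and $H\circ(Q'\circ H)=Q'\circ(H\circ H)=Q'\circ H$ by Lemma~\ref{Lem_b_out}, so $H\circ Q'$ and $Q'\circ H$ are left hyperideals, hence two-sided hyperideals by Lemma~\ref{ag}; both are nonempty, since $Q'=e\circ Q'\subseteq H\circ Q'$ and $\emptyset\neq Q'\circ Q'\subseteq Q'\circ H$. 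Minimality of $I$ and $J$ now gives $H\circ Q'=I$ and $Q'\circ H=J$, and since $Q'$ is itself a quasi hyperideal by Lemma~\ref{qqq} we get $Q=I\cap J=H\circ Q'\cap Q'\circ H\subseteq Q'$, whence $Q=Q'$; thus $Q$ is minimal.

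The underlying computations are entirely routine given Lemma~\ref{Lem_b_out} and the medial, paramedial and left invertive laws; the two points that genuinely need attention are (i) that $H\circ a$, $a\circ H$, $H\circ Q'$, $Q'\circ H$ are truly two-sided hyperideals, which is exactly where Lemma~\ref{ag} and Lemma~\ref{qqq} are used, and (ii) that every set to which minimality of $Q$ (or of $I$, $J$) is applied is nonempty, which is ensured by the inclusions $a^{2}\subseteq H\circ a\cap a\circ H$, $b^{2}\subseteq I\cap a\circ H$, $Q'\subseteq H\circ Q'$ and $Q'\circ Q'\subseteq Q'\circ H$.
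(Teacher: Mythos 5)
Your proof is correct and follows essentially the same route as the paper's: the same constructions $H\circ a$, $a\circ H$, $H\circ a\cap a\circ H$, $H\circ Q^{\prime}$, $Q^{\prime}\circ H$, and the same reliance on Lemma \ref{ag}, Lemma \ref{qqq}, Lemma \ref{Lem_b_out} and the medial, paramedial and left invertive laws. The only differences are that you make explicit the nonemptiness checks needed before invoking minimality (e.g. $b^{2}\subseteq I\cap a\circ H$ and $Q^{\prime}\subseteq H\circ Q^{\prime}$) and the use of the quasi-hyperideal property of $Q^{\prime}$ rather than of $Q$ in the final step, details the paper leaves implicit.
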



\begin{thebibliography}{99}
\bibitem{Corsini34} P. Bonansinga and P. Corsini, On semihypergroup and
hypergroup homomorphisms, Boll. Un. Mat. Ital., 6 (1982) 717-727.

\bibitem{Corsini23} P. Corsini and I. Cristea, Fuzzy sets and non complete
1-hypergroups, An. St. Univ. Ovidius Constanta, 13 (1) (2005) 27-54.

\bibitem{Corsini} P. Corsini and V. Leoreanu, Applications of hyperstructure
theory, Kluwer Academic Publications, (2003).

\bibitem{Corsini1} P. Corsini, Prolegomena of hypergroup theory, Aviani
editor, (1993).

\bibitem{Davvaz1} B. Davvaz, Some results on congruences on semihypergroups,
Bull. Malays. Math. Sci. Soc., 23 (2000) 53-58.

\bibitem{Davvaz2} B. Davvaz and N. S. Poursalavati, Semihypergroups and $S$%
-hypersystems, Pure Math. Appl., 11 (2000) 43-49.

\bibitem{book} B. Davvaz and V.L. Fotea, Hyperring theory and applications,
International Academic Press, USA, (2007).

\bibitem{Drbohlav} K. Drbohlav, T. Kepka and P. Nemec, Associativity
semihypergroups and related problems, Atti Convegno su Ipergruppi, altre
Strutture Multivoche e loro Applicazioni, Udine (1985).

\bibitem{Fasino} D. Fasino and D. Freni, Existence of proper semihypergroups
of type U on the right, Discrete Math., 307 (2007) 2826-2836.

\bibitem{Gutan} C. Gutan, Simplifiable semihypergroups, Proceedings of the
Algebraic hyperstructures and applications, (1990) 103-111.

\bibitem{Hasankhani} A. Hasankhani, Ideals in a semihypergroup and Green's
relations, Ratio Math., 13 (1999) 29-36.

\bibitem{Hedayati} H. Hedayati, On semihypergroups and regular relations,
Tarbiat Moallem University, 20th Seminar on Algebra, 22-23 (2009) 92-94.

\bibitem{Hila} K. Hila and J. Dine, On hyperideals in left almost
semihypergroups, ISRN Algebra, (2011) Article ID 953124, 8 pages.

\bibitem{Hila5} K. Hila, B. Davvaz and K. Naka, On quasi-hyperideals in
semihypergroups, Commun. Algebra., 39(11) (2011) 4183-4194.

\bibitem{Holgate} P. Holgate,\ Groupoids satisfying a simple invertive law,
The Math. Stud., 61 (1992) 101-106.

\bibitem{Jezek} J.R. Cho, J. Jezek and T. Kepka, Paramedial groupoids,
Czechoslovak Math. J., 49 (1999) 277-290.

\bibitem{Kazim} M.A. Kazim and M. Naseerudin, On almost-semigroup, Aligarh
Bull. Math., 2 (1972) 1-7.

\bibitem{Khan} M. Khan and N. Ahmad, Characterizations of left almost
semigroups by their ideals, J. Adv. Res. Pure Math., 2 (2010) 61-73.

\bibitem{Leoreanu} V. Leoreanu, About the simplifiable cyclic
semihypergroups, Ital. J. Pure Appl. Math., 7 (2000) 69-76

\bibitem{Marty} F. Marty, Sur une generalization de la notion de groupe, 8$%
^{iem}$ congres Math. Scandinaves. Stockholm., (1934) 45-49.

\bibitem{1978} Q. Mushtaq and S. M. Yusuf, On LA-semigroups, Aligarh Bull.
Math, 8 (1978) 65-70.

\bibitem{Muhtaq} Q. Mushtaq and M. Khan, M-systems in LA-semigroups,\
Southeast Asian Bull. Math., 33(2) (2009) 321--327.

\bibitem{1979} Q. Mushtaq and S. M. Yusuf, On locally associative
LA-semigroups, J. Nat. Sci. Math., 19 (1979) 57-62.

\bibitem{Onipchuk} S.V. Onipchuk, Regular semihypergroups, Mat. Sb., 183(6)
(1992) 43--54.

\bibitem{ref10} P.V. Proti\'{c} and N. Stevanovi\'{c}, AG-test and some
general properties of Abel-Grassmann's groupoids, P.U.M.A., 4(6) (1995)
371-383.

\bibitem{Protic} N. Stevanovi\'{c} and P. V. Proti\'{c}, Composition of
Abel-Grassmann's 3-bands, Novi Sad. J. Math., 23(4) (2004) 175-182.

\bibitem{V1} T. Vougiouklis, Hyperstructures and their representations,
Hadronic Press, Florida, (1994).
\end{thebibliography}
\end{document}